\definecolor{black}{rgb}{0.0, 0.0, 0.0}
\definecolor{red}{rgb}{1.0, 0.5, 0.5}
\newcommand{\margnote}[1]{
\ifthenelse{\boolean{shownotes}}%
{\marginpar{\raggedright\tiny\texttt{#1}}}%
{}%
}
\newcommand{\hole}[1]{
\ifthenelse{\boolean{shownotes}}%
{\begin{center} \fbox{ \rule {.25cm}{0cm} \rule[-.1cm]{0cm}{.4cm}
\parbox{.85\textwidth}{\begin{center} \texttt{#1}\end{center}} \rule
{.25cm}{0cm}}\end{center}} {} }
\def\d{\,\mathrm{d}}
\def\tot#1#2{\frac{d #1}{d #2}}
\title[Cucker-Smale type model with distributed time delays]{Emergent behavior of Cucker-Smale model with normalized weights and distributed time delays}
\author[Choi]{Young-Pil Choi}
\address[Young-Pil Choi]{\newline Department of Mathematics and Institute of Applied Mathematics, Inha University,
    \newline Incheon 402-751, Republic of Korea}
\email{ypchoi@inha.ac.kr}
\author[Pignotti]{Cristina Pignotti}
\address[Cristina Pignotti]{\newline
Dipartimento di Ingegneria e Scienze dell'Informazione e Matematica, Universit\`a di L'Aquila,
\newline Via Vetoio, 67010 L'Aquila, Italy}
\email{pignotti@univaq.it}
\numberwithin{equation}{section}
\newtheorem{theorem}{Theorem}[section]
\newtheorem{lemma}{Lemma}[section]
\newtheorem{remark}{Remark}[section]
\newtheorem{definition}{Definition}[section]
\def\({\begin{eqnarray}}
\def\){\end{eqnarray}}
\def\[{\begin{eqnarray*}}
\def\]{\end{eqnarray*}}
\newcommand{\R}{\mathbb R}
\newcommand{\mc}{\mathcal C}
\newcommand{\bq}{\begin{equation}}
\newcommand{\eq}{\end{equation}}
\newcommand{\lt}{\left}
\newcommand{\rt}{\right}
\newcommand{\pa}{\partial}
\def\Lyap{\mathcal{L}}
\def\P{\mathcal{P}}
\def\N{\mathbb{N}}
\def\bx{\mathbf{x}}
\def\bv{\mathbf{v}}
\begin{document}
%%%%%%%%%%%%%%%%
\allowdisplaybreaks

%%\date{\today}

\subjclass[]{}
\keywords{}

%\thanks{\textbf{Acknowledgments.} This work is supported by Engineering and Physical Sciences Research Council(EP/K008404/1).}

\begin{abstract} We study a Cucker-Smale-type flocking model with distributed time delay where individuals interact with each other through normalized communication weights. Based on a Lyapunov functional approach, we provide sufficient conditions for the velocity alignment behavior. We then show that as the number of individuals $N$ tends to infinity, the $N$-particle system can be well approximated by a delayed Vlasov alignment equation. Furthermore, we also establish the global existence of measure-valued solutions for the delayed Vlasov alignment equation and its large-time asymptotic behavior.
\end{abstract}

\maketitle \centerline{\date}

%\tableofcontents

%%%%%%%%%%%%%%%%%%%%%%%%%%%%%%%%%%%%%%%%%%%%%%%%%%%%%%%%%%%%%%%%%%%%%%%%%%%%%%%%%
%
%
%                        Section: Introduction
%
%
%%%%%%%%%%%%%%%%%%%%%%%%%%%%%%%%%%%%%%%%%%%%%%%%%%%%%%%%%%%%%%%%%%%%%%%%%%%%%%%%%
\section{Introduction}
In the last years the study of collective behavior of multi-agent systems has attracted the interest of many researchers in different scientific fields, such as biology, physics, control theory, social sciences, economics.
The celebrated Cucker-Smale model has been proposed and analyzed in \cite{CS1, CS2} to describe situations in which different agents, e.g. animals groups, reach a consensus (flocking), namely they align and move as a flock, based on a simple rule: each individual adjusts its velocity taking into account other agents' velocities.

In the original papers a symmetric interaction potential is considered. Then,
the case of non-symmetric interactions
has been studied by  Motsch and Tadmor \cite{Motsch-Tadmor}. Several generalizations and variants have been introduced to cover various applications' fields, e.g. more general interaction  rates and singular potentials \cite{CCH2, CCMP, CuckerDong, HaSlemrod, Mech, Pes}, cone-vision constraints \cite{Vicsek}, presence of leadership \cite{Couzin, Shen}, noise terms \cite{CuckerMordecki, EHS, HaLee},  crowds dynamics \cite{Cristiani, Lemercier}, infinite-dimensional models \cite{Albi, Bellomo, CFRT, Ha-Liu, Tadmor-Ha, Toscani}, control problems \cite{Borzi, Caponigro, CKPP, PRT}. We refer to \cite{CCP, CHL} for recent surveys on the Cucker-Smale type flocking models and its variants.

It is natural to introduce a time delay in the model, as a reaction time or a time to receive environmental information. The presence of a time delay makes the problem more difficult to deal with. Indeed, the time delay destroys some symmetry features of the model which are crucial in the proof of convergence to consensus.
For this reason, in spite of a great amount of literature on Cucker-Smale models, only a few papers are available concerning Cucker-Smale model with time delay \cite{CH17, CH19, CL, EHS, PT}. Cucker-Smale models with delay effects are also studied in \cite{PR, PRV} when a hierarchical structure is present, namely the agents are ordered in a specific order depending on which other agents they are leaders of or led by.

Here we consider a distributed delay term, i.e. we assume that the agent $i,$ $i=1,\dots,N,$ changes its velocity depending on the information received from other agents on a time interval $[t-\tau(t) , t].$ Moreover, we assume normalized communication weights (cf \cite{CH17}).
Let us consider a finite number $N\in\N$ of autonomous individuals
located in $\R^d$, $d\geq 1$. Let $x_i(t)$ and $v_i(t)$ be the position and velocity of $i$th individual. Then, in the current work,  we will deal with a Cucker-Smale model with distributed time delays. Namely,
let $\tau :[0, +\infty)\rightarrow (0, +\infty),$  be the time delay function belonging to $W^{1,\infty}(0,+\infty)$.
Throughout this paper, we assume that the time delay function $\tau$ satisfies
\begin{equation}\label{tau2}
\tau^\prime (t)\le 0\, \quad \mbox{and} \quad \tau(t)\ge \tau_*, \quad \mbox{for}\ t\ge0,
\end{equation}
for some positive constant $\tau_*.$
Then, denoting $\tau_0:=\tau(0),$ we have

\begin{equation}\label{tau1}
\tau_*\le \tau (t)\le \tau_0\, \quad \mbox{for}\ t\ge 0.
\end{equation}
It is clear that the constant time delay $\tau(t) \equiv \bar\tau > 0$ satisfies the conditions above.

Our main system is given by
\begin{align}\label{main_eq}
\begin{aligned}
\tot{x_i(t)}{t} &= v_i(t),\qquad i=1,\cdots,N, \quad t >0,\\
\tot{v_i(t)}{t} & = \frac 1 {h(t)} \sum_{k=1}^N \int_{t-\tau(t)}^t \alpha(t-s)\phi(x_k(s),x_i(t)) (v_k(s) - v_i(t))\,ds,
\end{aligned}
\end{align}
where $\phi(x_k(s),x_i(t))$ are the normalized communication weights given by
\begin{equation}\label{interact}
\phi(x_k(s),x_i(t)) = \left\{ \begin{array}{ll}
\displaystyle \frac{\psi(|x_k(s) - x_i(t)|)}{\sum_{j\neq i} \psi(|x_{j}(s) - x_i(t)|)} & \textrm{if $k \neq i$,}\\[4mm]
 0 & \textrm{if $k=i$,}
  \end{array} \right.
\end{equation}
with the influence function $\psi: [0,\infty) \to (0,\infty).$ Throughout this paper, we assume that the influence function $\psi$ is bounded, positive,
nonincreasing and Lipschitz continuous on $[0,\infty)$, with $\psi(0) = 1$. Moreover, $\alpha: [0,\tau_0] \to [0,\infty)$ is a weight function satisfying
\[
\int_0^{ \tau_*} \alpha(s)\, ds  > 0,
\]
and
$$h(t):=\int_0^{\tau (t)} \alpha (s)\, ds\,, \quad t\ge 0.$$
We consider the system subject to the initial datum
\(\label{IC0}
   x_i(s) =: x^0_i(s), \quad v_i(s) =: v^0_i(s),\qquad i=1,\cdots,N, \quad s \in [-\tau_0,0],
\)
i.e., we prescribe the initial position and velocity trajectories $x^0_i, v^0_i\in\mathcal C([-\tau_0,0]; \R^d)$.

For the particle system \eqref{main_eq}, we will first discuss the asymptotic behavior of solutions in Section \ref{sec_par}. Motivated from \cite{CH17, Ha-Liu, Motsch-Tadmor}, we derive a system of dissipative differential inequalities, see Lemma \ref{lem_sddi}, and construct a Lyapunov functional. This together with using Halanay inequality enables us to show the asymptotic velocity alignment behavior of solutions under suitable conditions on the initial data.

We next derive, analogously to \cite{CH17} where the case of a single pointwise time delay is considered, a delayed Vlasov alignment equation from the particle system \eqref{main_eq} by sending the number of particles $N$ to infinity:
\bq\label{main_eq2}
\pa_t f_t + v \cdot \nabla_x f_t + \nabla_v \cdot \displaystyle \lt(\frac{1}{h(t)}\int_{t - \tau(t)}^t \alpha
(t-s)F[f_s]\,dsf_t\rt) = 0,
\eq
where $f_t=f_t(x,v)$ is the one-particle distribution function on the phase space $\R^d \times \R^d$ and the velocity alignment force $F$ is given by
\[
F[f_s](x,v) :=  \frac{\displaystyle \int_{\R^d \times \R^d} \psi(|x-y|)(w-v)f_s(y,w)\,dydw}{\displaystyle \int_{\R^d \times \R^d} \psi(|x-y|)f_s(y,w)\,dydw}.
\]
We show the global-in-time existence and stability of measure-valued solutions to \eqref{main_eq2} by employing the Monge-Kantorowich-Rubinstein distance. As a consequence of the stability estimate, we discuss a mean-field limit providing a quantitative error estimate between the empirical measure associated to the particle system \eqref{main_eq} and the measure-valued solution to \eqref{main_eq2}. We then extend the estimate of large behavior of solutions for the particle system \eqref{main_eq} to the one for the delayed Vlasov alignment equation \eqref{main_eq2}. For this, we use the fact that the estimate of large-time behavior of solutions to the particle system \eqref{main_eq} is independent of the number of particles.  By combining this and the mean-field limit estimate, we show that the diameter of velocity-support of solutions of \eqref{main_eq2} converges to zero as time goes to infinity. Those results will be proved in Section \ref{MF}.

%The paper is organized as follows. In Section \ref{sec_par} we prove a flocking result, under an appropriate condition on the initial data, for the particle system \eqref{main_eq}. In Section \ref{MF} we consider the mean field limit for $N \rightarrow \infty$ and we study well-posedness  and asymptotic behavior of the derived Vlasov type equation with delay.

%%%%%%%%%%%%%%%%%%%%%%%%%%%%%%%%%%%%%%%%%%%%%%%%%%%%%%%%%%%%%%%%%%%%%%%%%%%%%%%%%
%                        \section{Proof of Theorem \ref{thm_main}}
%
%%%%%%%%%%%%%%%%%%%%%%%%%%%%%%%%%%%%%%%%%%%%%%%%%%%%%%%%%%%%%%%%%%%%%%%%%%%%%%%%
\section{Asymptotic flocking behavior of the particle system \eqref{main_eq}}\label{sec_par}
We start with presenting a notion of flocking behavior for the system \eqref{main_eq}, and for this we introduce the spatial and, respectively, velocity diameters as follows:
\(  \label{dXdV}
   d_X(t) := \max_{1 \leq i,j \leq N}|x_i(t) - x_j(t)| \quad \mbox{and} \quad d_V(t) := \max_{1 \leq i,j \leq N}|v_i(t) - v_j(t)|.
\)
\begin{definition}\label{def:flocking}
We say that the system with particle positions $x_i(t)$ and velocities $v_i(t)$,
$i=1,\dots,N$ and $t\geq 0$, exhibits \emph{asymptotic flocking}
if the spatial and velocity diameters satisfy
\[
   \sup_{t\geq 0} d_X(t) < \infty \quad \mbox{and} \quad \lim_{t\to\infty} d_V(t) = 0.
\]
\end{definition}
We then state our main result in this section on the asymptotic flocking behavior of the system \eqref{main_eq}.
\begin{theorem}\label{thm_main}
%Suppose that the initial datum \eqref{IC0} satisfies
%\[
 %  d_V(s) \leq d_V(0)\quad \mbox{for} \quad s \in [-\tau,0]
%\]
Assume $N>2.$
Suppose that the initial data $x_i^0, v_i^0$ are continuous on the time interval $[-\tau_0,0]$
and denote
\(   \label{Rv}
    R_v:= \max_{s\in [-\tau_0, 0]}\max_{1\le i\le N} \vert v_i^0(s)\vert\,.
\)
Moreover, denoted $\beta_N=\frac {N-2}{N-1}$, assume that
\(  \label{ass1}
h(0)d_V(0) + \int_0^{\tau_0} \alpha(s)\lt( \int_{-s}^0 d_V(z)\, \,dz\rt) ds < \beta_N\int_0^{\tau_*} \alpha(s) \int_{d_X(-s) + R_v \tau_0}^\infty \psi(z)\,dzds,
%     \quad \mbox{and} \quad d_V(s) \leq d_V(0) \quad \mbox{for } s \in [-\tau,0].
\)
where $d_X$ and $d_V$ denote, respectively, the spatial and velocity diameters defined in \eqref{dXdV}.
Then the solution of the system \eqref{main_eq}--\eqref{IC0} is global in time
and satisfies
\[
\sup_{t\geq 0} d_X(t) < \infty
\]
and
\[
   d_V(t) \leq \max_{s\in [-\tau_0,0]}d_V(s)e^{-\gamma t} \quad \mbox{for } t \geq 0,
\]
for a suitable positive constant $\gamma$ independent of $t$. The constant $\gamma$ can be chosen also independent of $N.$
\end{theorem}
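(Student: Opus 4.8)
The plan is to combine an a priori $\ell^\infty$-bound on the velocities, a sharp estimate of the normalized alignment force along the extremal pair (which is where the constant $\beta_N$ comes from), a Lyapunov functional that keeps the spatial diameter bounded, and Halanay's inequality to turn that boundedness into exponential decay of $d_V$.

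\emph{Step 1: a priori bounds and global existence.} Write the velocity equation in \eqref{main_eq} as $\tot{v_i(t)}{t}=\frac1{h(t)}\int_{t-\tau(t)}^t\alpha(t-s)\big(\bar v_i(s,t)-v_i(t)\big)\,ds$ with $\bar v_i(s,t):=\sum_k\phi(x_k(s),x_i(t))v_k(s)$; by \eqref{interact} this is a convex combination of $\{v_k(s)\}_k$ since the weights sum to $1$ (which needs only $N\ge 2$). A maximum-principle computation for $\tot{}{t}|v_i(t)|^2$ then gives $\max_i|v_i(t)|\le R_v$ for all $t\ge 0$. Consequences used throughout: $\tot{}{t}d_X(t)\le d_V(t)\le 2R_v$; $|x_k(s)-x_i(t)|\le d_X(t)+R_v\tau_0$ for $s\in[t-\tau(t),t]$; and the denominators in \eqref{interact} are bounded below by $(N-1)\psi(d_X(t)+R_v\tau_0)>0$, so that the right-hand side of \eqref{main_eq} is locally Lipschitz in the history. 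Local well-posedness follows, and the uniform velocity bound (hence at most linear growth of positions) rules out finite-time blow-up, so the solution is global.

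\emph{Step 2: the dissipative differential inequalities (Lemma \ref{lem_sddi}), the main obstacle.} Fix $t$ with $d_V(t)>0$, pick $i,j$ with $|v_i(t)-v_j(t)|=d_V(t)$, and set $e:=(v_i(t)-v_j(t))/d_V(t)$; at a point of differentiability, $\tfrac12\tot{}{t}d_V(t)^2=\langle v_i(t)-v_j(t),\tot{v_i(t)}{t}-\tot{v_j(t)}{t}\rangle=\frac1{h(t)}\int_{t-\tau(t)}^t\alpha(t-s)\big(d_V(t)\langle e,\bar v_i(s,t)-\bar v_j(s,t)\rangle-d_V(t)^2\big)ds$. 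The crucial step is to estimate $\langle e,\bar v_i(s,t)-\bar v_j(s,t)\rangle$. Writing $\phi_{ik}:=\phi(x_k(s),x_i(t))$ one has $\sum_k(\phi_{ik}-\phi_{jk})=0$, hence $\langle e,\bar v_i(s,t)-\bar v_j(s,t)\rangle=\sum_k(\phi_{ik}-\phi_{jk})\big(\langle e,v_k(s)\rangle-c\big)$ for every $c\in\R$; taking $c$ equal to the midpoint of $\{\langle e,v_k(s)\rangle\}_k$, so that $|\langle e,v_k(s)\rangle-c|\le\tfrac12 d_V(s)$, and splitting the sum according to the sign of $\phi_{ik}-\phi_{jk}$, I obtain $\langle e,\bar v_i(s,t)-\bar v_j(s,t)\rangle\le\tfrac12 d_V(s)\sum_k|\phi_{ik}-\phi_{jk}|=d_V(s)\big(1-\sum_k\min(\phi_{ik},\phi_{jk})\big)$. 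The decisive observation is now that for the $N-2$ indices $k\ne i,j$ one has $\min(\phi_{ik},\phi_{jk})\ge\psi\big(\max(|x_k(s)-x_i(t)|,|x_k(s)-x_j(t)|)\big)/(N-1)$ — the denominators in \eqref{interact} are $\le N-1$ since $\psi\le 1$ — so $\sum_k\min(\phi_{ik},\phi_{jk})\ge\tfrac{N-2}{N-1}\psi(d_X(t)+R_v\tau_0)=\beta_N\psi(d_X(t)+R_v\tau_0)$; keeping the sharper $s$-dependent bound $d_X(s)+R_v(t-s)$ inside $\psi$ so as not to waste the $\alpha$-weight, this gives $\langle e,\bar v_i(s,t)-\bar v_j(s,t)\rangle\le\big(1-\beta_N\psi(d_X(s)+R_v(t-s))\big)d_V(s)$. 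Substituting above yields
$$\tot{}{t}d_X(t)\le d_V(t),\qquad\tot{}{t}d_V(t)\le -d_V(t)+\frac1{h(t)}\int_{t-\tau(t)}^t\alpha(t-s)\big(1-\beta_N\psi(d_X(s)+R_v(t-s))\big)d_V(s)\,ds,$$
a Halanay-type inequality whose memory mass $1-\frac{\beta_N}{h(t)}\int_{t-\tau(t)}^t\alpha(t-s)\psi(\cdot)\,ds$ is strictly below the dissipation coefficient $1$. Establishing this inequality — in particular the appearance of $\beta_N$ through the overlap of the two weight vectors — is the heart of the proof; the rest is bookkeeping.

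\emph{Step 3: Lyapunov functional, boundedness of $d_X$, Halanay.} Multiply the $d_V$-inequality by $h(t)$ and use $h'(t)=\alpha(\tau(t))\tau'(t)\le 0$ — this is where \eqref{tau2} is used. Adding the standard distributed-delay correction $E(t):=\int_0^{\tau(t)}\alpha(s)\big(\int_{t-s}^t d_V(z)\,dz\big)ds$, for which $\tot{}{t}E(t)\le h(t)d_V(t)-\int_{t-\tau(t)}^t\alpha(t-s)d_V(s)\,ds$ (again using $\tau'\le 0$), one gets $\tot{}{t}\big(h(t)d_V(t)+E(t)\big)\le -\beta_N\int_{t-\tau(t)}^t\alpha(t-s)\psi(d_X(s)+R_v(t-s))d_V(s)\,ds$. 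One then adds a $\psi$-potential $\mathcal P(t):=\beta_N\int_0^{\tau_*}\alpha(s)\big(\int_{d_X(-s)+R_v\tau_0}^{\,d_X(t-s)+R_v\tau_0}\psi(z)\,dz\big)ds$, whose derivative — using $\tot{}{t}d_X\le d_V$, $\psi$ nonincreasing, and $\tau(t)\ge\tau_*$ — is dominated by $\beta_N\int_{t-\tau(t)}^t\alpha(t-s)\psi(d_X(s)+R_v(t-s))d_V(s)\,ds$; hence $\mathcal L(t):=h(t)d_V(t)+E(t)+\mathcal P(t)$ is non-increasing. Since $\mathcal P(0)=0$ and $E(0)=\int_0^{\tau_0}\alpha(s)\big(\int_{-s}^0 d_V(z)\,dz\big)ds$, the value $\mathcal L(0)$ is exactly the left-hand side of \eqref{ass1}; because $h(t)d_V(t)+E(t)\ge 0$, monotonicity gives $\mathcal P(t)\le\mathcal L(0)<\beta_N\int_0^{\tau_*}\alpha(s)\big(\int_{d_X(-s)+R_v\tau_0}^\infty\psi(z)\,dz\big)ds$, which keeps $\int_0^{\tau_*}\alpha(s)\big(\int_{d_X(t-s)+R_v\tau_0}^\infty\psi(z)\,dz\big)ds$ bounded below by a fixed positive constant for all $t$, and hence — since $d_X$ varies at rate $\le 2R_v$ — forces $D_\infty:=\sup_{t\ge0}d_X(t)<\infty$. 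Consequently $\psi(d_X(s)+R_v(t-s))\ge\psi(D_\infty+R_v\tau_0)=:\psi_*>0$, so the memory mass in the $d_V$-inequality is $\le 1-\beta_N\psi_*<1$, and Halanay's inequality yields $d_V(t)\le\big(\max_{s\in[-\tau_0,0]}d_V(s)\big)e^{-\gamma t}$ for $t\ge 0$, with $\gamma>0$ the positive root of $\gamma=1-(1-\beta_N\psi_*)e^{\gamma\tau_0}$. Finally, for $N\ge 3$ one has $\beta_N\in[\tfrac12,1)$, and since the right-hand side of \eqref{ass1} is increasing in $\beta_N$, the hypothesis — once valid — remains valid for all larger $N$ with the same data and with $D_\infty$ bounded uniformly in $N$; therefore $\psi_*$, and hence $\gamma$, can be chosen independent of $N$.
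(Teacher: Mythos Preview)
Your proof is correct and follows essentially the same route as the paper: the uniform velocity bound (Lemma~\ref{lem_bdd}), the dissipative inequalities of Lemma~\ref{lem_sddi}, the Lyapunov functional $h(t)d_V(t)+\mathcal L_1(t)+\mathcal L_2(t)$, and Halanay's inequality (Lemma~\ref{lem_gron}) are all present with the same roles. The only noteworthy variation is in the derivation of the $\beta_N$ factor: the paper writes the force difference as $(1-\bar\phi_{ij})\sum_k(a_{ij}^k-a_{ji}^k)v_k(s)$ with convex coefficients $a_{ij}^k$ and invokes the convex hull $\Omega(s)$, whereas you subtract the midpoint $c$ of $\{\langle e,v_k(s)\rangle\}_k$ and bound by $\tfrac12 d_V(s)\sum_k|\phi_{ik}-\phi_{jk}|=d_V(s)\big(1-\sum_k\min(\phi_{ik},\phi_{jk})\big)$; the two computations are equivalent and yield the identical bound $\bar\phi_{ij}\ge\beta_N\psi(\cdot)$. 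Your remaining cosmetic choices (using $R_v(t-s)$ in place of $R_v\tau_0$, taking the $s$-integral in $\mathcal P$ over $[0,\tau_*]$ rather than $[0,\tau(t)]$, and omitting the absolute value in the potential) are harmless and lead to the same conclusion.
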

{\begin{remark} {\rm If the influence function $\psi$ is not integrable, i.e., it has a heavy tail, then the right hand side of \eqref{ass1} becomes infinite, and thus the assumption \eqref{ass1} is satisfied for all initial data and time delay $\tau(t)$ satisfying \eqref{tau2}. This is reminiscent of the unconditional flocking condition for the Cucker-Smale type models, see \cite{CFRT, CHL, CH17}. On the other hand, if the influence function $\psi$ is integrable and the weight function $\alpha(s)$ is given by $\alpha(s) = \delta_{\tau}(s)$, then the system \eqref{main_eq} with a constant time delay $\tau(t)\equiv \tau$ becomes the one with discrete time delay:
$$\begin{aligned}
\tot{x_i(t)}{t} &= v_i(t),\qquad i=1,\cdots,N, \quad t >0,\\
\tot{v_i(t)}{t} & =  \sum_{k=1}^N \phi(x_k(t-\tau),x_i(t)) (v_k(t-\tau) - v_i(t)).
\end{aligned}$$
Note the above system is studied in \cite{CH17}. For this system, the assumption \eqref{ass1} reduces to
\[
d_V(0) + \int_{-\tau}^0 d_V(z)\,dz < \beta_N \int_{d_X(-\tau)+R_v \tau}^\infty \psi(z)\,dz.
\]
Since $d_V(z) \geq 0$, the left hand side of the above inequality increases as the size of time delay $\tau>0$ increases. On the other hand, the integrand on the right hand side decreases as $\tau$ increases. This asserts that small size of the time delay provides a  larger set of initial data.
}
\end{remark}
}
\begin{remark}{\rm
Observe that our theorem above gives a flocking result when the number of agents $N$ is greater than two. The result for $N=2$ is trivial for our normalized model.
}\end{remark}

\begin{remark}\label{rmk_2.2}
{\rm
Note that $0 < \beta_N \uparrow 1$ as $N \to \infty.$ Then, this implies that for any $\beta \in (0,1)$ there exists $N_* \in \mathbb{N}$ such that $\beta_N \geq \beta$ for $N \geq N_*$.}
\end{remark}

For the proof of Theorem \ref{thm_main}, we will need several auxiliary results. Inspired by \cite{CH17}, we first show the uniform-in-time bound estimate of the maximum speed of the system \eqref{main_eq}.
\begin{lemma}\label{lem_bdd}
Let $R_v> 0$ be given by \eqref{Rv} and
let $(\bx,\bv)$ be a local-in-time $\mathcal{C}^1$-solution of the system \eqref{main_eq}--\eqref{IC0}.
Then the solution is global in time and satisfies
\[
   \max_{1 \leq i \leq N} |v_i(t)| \leq R_v \quad \mbox{for} \quad t \geq -\tau_0.
\]
\end{lemma}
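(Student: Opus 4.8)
The plan is to derive the a priori speed bound $\max_i|v_i(t)|\le R_v$ directly from the averaging structure of \eqref{main_eq}, and then obtain global existence by a continuation argument. Two elementary identities drive everything. First, for $N\ge 2$ the normalized weights satisfy $\sum_{k=1}^N\phi(x_k(s),x_i(t))=1$. Second, the substitution $u=t-s$ gives $\int_{t-\tau(t)}^t\alpha(t-s)\,ds=\int_0^{\tau(t)}\alpha(u)\,du=h(t)$. Hence the right-hand side of the $v_i$-equation is a genuine average of the vectors $v_k(s)-v_i(t)$ against the nonnegative kernel $\frac{1}{h(t)}\alpha(t-s)\phi(x_k(s),x_i(t))$, and dotting with $v_i(t)$ and using Cauchy--Schwarz in the form $v_k(s)\cdot v_i(t)\le |v_k(s)|\,|v_i(t)|$ yields, at any $t$ where $v_i$ solves the ODE,
\begin{equation*}
\frac{1}{2}\frac{d}{dt}|v_i(t)|^2 \;\le\; \frac{|v_i(t)|}{h(t)}\sum_{k=1}^N\int_{t-\tau(t)}^t\alpha(t-s)\,\phi(x_k(s),x_i(t))\bigl(|v_k(s)|-|v_i(t)|\bigr)\,ds .
\end{equation*}

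I would then run a maximum-principle argument on the running maximum of the speeds. Fix a time $T$ in the maximal interval of existence and set $W:=\max_{1\le i\le N}\max_{s\in[-\tau_0,T]}|v_i(s)|$; this is attained by continuity and compactness and satisfies $W\ge R_v$. Arguing by contradiction, suppose $W>R_v$ and let $s^*$ be the \emph{first} time in $[-\tau_0,T]$ at which $|v_i(s^*)|=W$ for some index $i$. Since $|v_i(s)|\le R_v<W$ on $[-\tau_0,0]$ by \eqref{Rv}, we have $s^*>0$, so $v_i$ satisfies the ODE in a neighborhood of $s^*$; moreover, by minimality of $s^*$ together with continuity of the $v_k$, the running maximum $\sup_{\sigma\le s}\max_j|v_j(\sigma)|$ is \emph{strictly} below $W$ for every $s<s^*$, so $|v_k(s)|<W=|v_i(s^*)|$ for all $k$ and all $s<s^*$. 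Evaluating the displayed inequality at $t=s^*$: every integrand is $\le 0$, while on the subinterval $[s^*-\tau_*,s^*]\subseteq[s^*-\tau(s^*),s^*]$ (using $\tau(s^*)\ge\tau_*$ from \eqref{tau1}) the weight $\alpha(s^*-s)$ is positive on a set of positive measure because $\int_0^{\tau_*}\alpha>0$, $\phi(x_k(s),x_i(s^*))>0$ for $k\ne i$ because $\psi>0$, and $|v_k(s)|-|v_i(s^*)|<0$ there; hence the right-hand side is \emph{strictly} negative, i.e. $v_i(s^*)\cdot v_i'(s^*)<0$. On the other hand, for $t$ in a left-neighborhood of $s^*$ one has $|v_i(t)|^2\le|v_i(s^*)|^2$, so $\frac{d}{dt}|v_i(t)|^2\big|_{t=s^*}\ge 0$ — a contradiction. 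Therefore $W=R_v$, that is $\max_i|v_i(t)|\le R_v$ on $[-\tau_0,T]$.

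Global existence then follows by standard continuation. Once the velocities are bounded by $R_v$ on $[0,T]$, the positions grow at most linearly, the denominators $\sum_{j\ne i}\psi(|x_j-x_i|)$ are bounded away from zero on finite time intervals (as $\psi$ is positive and nonincreasing), and $|v_i'(t)|\le 2R_v$ — again using $\sum_k\phi=1$ and $\int_{t-\tau(t)}^t\alpha(t-s)\,ds=h(t)$. Thus the solution is uniformly Lipschitz on any finite interval and cannot blow up, so the maximal interval is $[0,\infty)$ and the bound $\max_i|v_i(t)|\le R_v$ holds for all $t\ge-\tau_0$.

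The one delicate point is the maximum-principle step: one must verify that the running maximum is strictly below $W$ on $[-\tau_0,s^*)$ and that the nonnegative integrand in the displayed inequality is not $ds$-almost-everywhere zero on $[s^*-\tau(s^*),s^*]$. This is exactly where the structural hypotheses $\psi>0$, $\int_0^{\tau_*}\alpha>0$ and $\tau(t)\ge\tau_*$ enter; the rest is routine.
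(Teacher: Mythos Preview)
Your argument is correct and follows the same invariant-region/maximum-principle strategy as the paper, but the technical execution differs. The paper introduces an $\epsilon$-slack: it lets $T^\epsilon$ be the first time $\max_i|v_i|$ reaches $R_v+\epsilon$, derives from the averaging structure the scalar inequality $\tot{}{t}|v_i(t)|\le (R_v+\epsilon)-|v_i(t)|$ on $[0,T^\epsilon)$, and then \emph{integrates} it (comparison/Gronwall) to obtain $\lim_{t\to T^\epsilon{}^-}\max_i|v_i(t)|<R_v+\epsilon$, a contradiction; sending $\epsilon\downarrow 0$ finishes. You instead work directly at the first hitting time $s^*$ of the exact maximum $W$ and extract a strict sign $\tot{}{t}|v_i|^2\big|_{t=s^*}<0$ from the integrand itself, which is where you (and only you) must explicitly invoke $\psi>0$, $\tau\ge\tau_*$, and $\int_0^{\tau_*}\alpha>0$ to guarantee the integrand is strictly negative on a set of positive measure. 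The paper's $\epsilon$-trick is slightly more economical because the strict inequality comes for free from the exponential comparison rather than from a pointwise sign analysis of the kernel; your route is more direct but needs exactly the extra care you flagged in your closing paragraph.
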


\begin{proof}
Let us fix $\epsilon>0$ and set
$$S^\epsilon := \{\, t>0\,:\, \max_{1\le i\le N} \vert v_i(s)\vert <R_v+\epsilon \quad \forall\ s\in [0,t)\,\}.$$
By continuity, $S^\epsilon\ne\emptyset.$ Let us denote $T^\epsilon :=\sup S^{\epsilon} > 0.$ We want to show that $T^\epsilon =+\infty.$
Arguing by contradiction, let us suppose $T^\epsilon$ finite. This gives, by continuity,
\begin{equation}\label{Q2}
\max_{1\le i\le N} \vert v_i(T^\epsilon)\vert =R_v + \epsilon.
\end{equation}
From \eqref{main_eq}--\eqref{IC0}, for $t<T^\epsilon$ and $i=1,\dots,N,$ we have
$$
\begin{array}{l}
\displaystyle{
\tot{\vert v_i(t)\vert^2}{t} \le  \frac 2{h(t)} \sum_{k=1}^N \int_{t-\tau(t)}^t \alpha(t-s)\phi(x_k(s),x_i(t)) (\vert v_k(s)\vert \,\vert v_i(t)\vert  - \vert v_i(t)\vert^2)\,ds}
\\
\displaystyle{\hspace{1,5 cm}
\le \frac 2 {h(t)} \sum_{k=1}^N \int_{t-\tau(t)}^t \alpha(t-s)\phi(x_k(s),x_i(t)) \max_{1\le k\le N} \vert v_k(s)\vert \,\vert v_i(t)\vert\, ds  - 2\vert v_i(t)\vert^2}\,.
\end{array}
$$
Note that
\[
\max_{1\le k\le N} \vert v_k(s)\vert \leq R_v + \epsilon \quad \mbox{and} \quad \sum_{k=1}^N\phi(x_k(s),x_i(t)) = 1,
\]
for $s \in [t-\tau(t), t]$ with $t < T^\epsilon$. Thus we obtain
$$
\tot{\vert v_i(t)\vert^2}{t} \le 2 [(R_v+\epsilon) \vert v_i(t)\vert -\vert v_i(t)\vert^2]\,,$$
which gives
\begin{equation}\label{Q1}
\tot{\vert v_i(t)\vert}{t} \le  (R_v+\epsilon) -\vert v_i(t)\vert\,.
\end{equation}
From \eqref{Q1} we obtain
$$
\lim_{t \to {T^\epsilon}^{-}} \ \max_{1 \leq i \leq N}\vert v_i(t)\vert \le e^{-T^\epsilon} \lt(\max_{1 \leq i \leq N}\vert v_i(0) \vert -R_v-\epsilon\rt)+R_v+\epsilon < R_v+\epsilon.
$$
This is in contradiction with \eqref{Q2}. Therefore, $T^\epsilon=+\infty\,.$
Being $\epsilon>0$ arbitrary, the claim is proved.
\end{proof}

In the lemma below, motivated from \cite{CH17, Tan} we derive the differential inequalities for $d_X$ and $d_V$. We notice that the diameter functions $d_X$ and $d_V$ are not $\mc^1$ in general. Thus we introduce the upper Dini derivative to consider the time derivative of these functions: for a given function $F = F(t)$, the upper Dini derivative of $F$ at $t$ is defined by
\[
D^+ F(t) := \limsup_{h \to 0+} \frac{F(t+h) - F(t)}{h}.
\]
Note that the Dini derivative coincides with the usual derivative when the function is differentiable at $t$.
\begin{lemma}\label{lem_sddi}
Let $R_v> 0$ be given by \eqref{Rv} and
let $(\bx,\bv)$ be the global $\mathcal{C}^1$-solution of the system \eqref{main_eq}--\eqref{IC0}
constructed in Lemma \ref{lem_bdd}.
Then, for almost all $t>0$, we have
$$\begin{aligned}
\left\vert D^+ d_X(t)\right\vert   &\leq d_V(t),\cr
D^+ d_V(t) &\leq \frac 1 {h(t)}\int_{t-\tau(t) }^t \alpha(t-s)\Big(1 - \beta_N\psi(d_X(s) + R_v \tau_0)\Big)d_V(s)\,ds - d_V(t).
\end{aligned}$$
\end{lemma}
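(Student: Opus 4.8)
The estimate on $D^+d_X$ is the standard one: for any pair $i,j$ realizing the maximum at time $t$, $\frac{d}{dt}|x_i-x_j|^2 = 2(x_i-x_j)\cdot(v_i-v_j) \le 2|x_i-x_j|\,|v_i-v_j| \le 2 d_X(t) d_V(t)$, and using a standard argument (Rademacher / the fact that $d_X$ is locally Lipschitz and its Dini derivative at a maximizing pair is controlled) gives $|D^+ d_X(t)| \le d_V(t)$ for a.e.\ $t$. So the real content is the inequality for $D^+ d_V(t)$.

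For that, I would fix a time $t$ and pick indices $M,m$ such that $d_V(t) = |v_M(t) - v_m(t)|$, and compute $\frac{d}{dt}|v_M(t)-v_m(t)|^2 = 2(v_M(t)-v_m(t))\cdot\big(\dot v_M(t) - \dot v_m(t)\big)$. Plugging in \eqref{main_eq} and using $\sum_k \phi(x_k(s),x_i(t)) = 1$ for $i=M,m$, I rewrite
\[
\dot v_M(t) - \dot v_m(t) = \frac{1}{h(t)}\int_{t-\tau(t)}^t \alpha(t-s)\Big[\sum_k \phi(x_k(s),x_M(t))(v_k(s)-v_M(t)) - \sum_k \phi(x_k(s),x_m(t))(v_k(s)-v_m(t))\Big]ds.
\]
The key algebraic manipulation is to insert and subtract $v_m(t)$ (resp.\ $v_M(t)$) inside each sum so that each bracket becomes $\sum_k \phi(x_k(s),x_M(t))(v_k(s)-v_m(t)) - (v_M(t)-v_m(t))$ minus the analogous term with roles swapped; taking the inner product with $v_M(t)-v_m(t)$, the two "$-(v_M(t)-v_m(t))$" contributions combine to give the term $-2\,|v_M(t)-v_m(t)|^2 \cdot \frac{1}{h(t)}\int \alpha = -2 d_V(t)^2$ (after dividing by $2d_V(t)$, this is the $-d_V(t)$ term).

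For the remaining convex-combination terms $\sum_k \phi(x_k(s),x_M(t)) (v_k(s)-v_m(t))\cdot(v_M(t)-v_m(t))$, the trick — as in Motsch--Tadmor and \cite{CH17} — is that $(v_k(s)-v_m(t))\cdot(v_M(t)-v_m(t)) \le |v_M(t)-v_m(t)|\,d_V(s)$ pointwise only gives the wrong constant; instead one splits off the diagonal-type smallest weight. Concretely, since $\psi$ is nonincreasing and $|x_k(s)-x_M(t)| \le d_X(s) + R_v\tau_0$ (using $|x_i(t)-x_i(s)| = |\int_s^t v_i| \le R_v\tau_0$ from Lemma \ref{lem_bdd} and $s \ge t-\tau(t) \ge t-\tau_0$), one has $\psi(|x_k(s)-x_M(t)|) \ge \psi(d_X(s)+R_v\tau_0)$, while $\sum_{j\ne M}\psi(|x_j(s)-x_M(t)|) \le N-1$ since $\psi\le 1$; combining these with the fact that at least $N-2$ of the other indices differ from $m$, one extracts a lower bound $\ge \beta_N\,\psi(d_X(s)+R_v\tau_0)$ on the total weight that can be "aligned toward $v_m(t)$". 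The surviving estimate becomes $\frac{1}{h(t)}\int_{t-\tau(t)}^t \alpha(t-s)\big(1 - \beta_N\psi(d_X(s)+R_v\tau_0)\big) d_V(s)\,ds \cdot 2 d_V(t)$ after bounding the non-extremal part by $d_V(s)$; dividing the differential inequality for $|v_M-v_m|^2$ by $2 d_V(t)$ yields the claim. The main obstacle is organizing this splitting cleanly so the factor $\beta_N = \frac{N-2}{N-1}$ emerges with the right sign — this is where one must carefully track that subtracting one extremal index ($m$ itself) from the normalization produces $N-2$ rather than $N-1$, and handle the measure-zero set of times where $D^+$ differs from the classical derivative via the usual lemma that $D^+ d_V(t) \le \big(\frac{d}{dt}|v_M-v_m|^2\big)/(2d_V(t))$ whenever $d_V(t)>0$, together with a separate trivial treatment when $d_V(t)=0$.
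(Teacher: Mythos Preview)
Your overall strategy and the individual weight bounds ($\phi(x_k(s),x_i(t))\ge\psi(d_X(s)+R_v\tau_0)/(N-1)$, and $N-2$ contributing indices giving the factor $\beta_N$) match the paper. The gap is at the step you label ``split off the diagonal-type smallest weight.'' After your add-and-subtract of $v_m(t)$ and $v_M(t)$ you are left with quantities $(v_k(s)-v_m(t))\cdot(v_M(t)-v_m(t))$ that mix the two times $s$ and $t$; contrary to what you write, the pointwise bound $(v_k(s)-v_m(t))\cdot(v_M(t)-v_m(t)) \le |v_M(t)-v_m(t)|\,d_V(s)$ is not merely off by a constant---it is false in general, since $|v_k(s)-v_m(t)|$ is not controlled by $d_V(s)$ (take $d_V(s)=0$ with $v_m(t)\neq v_k(s)$). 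Your description does not explain how to remove these cross-time terms, and a naive weight-splitting applied to them does not produce the factor $(1-\beta_N\psi)\,d_V(s)$.

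The paper's argument never introduces such cross-time differences. Using $\sum_k\phi(x_k(s),x_i(t))=1$ it extracts the $-d_V(t)$ term directly, leaving only
$\sum_k\big(\phi(x_k(s),x_M(t))-\phi(x_k(s),x_m(t))\big)v_k(s)$,
a difference of two convex combinations of $\{v_k(s)\}_k$ at the \emph{same} time $s$. The crucial step is then to subtract the common part $\phi_{Mm}^k:=\min\{\phi(x_k(s),x_M(t)),\phi(x_k(s),x_m(t))\}$ from both weights: setting $\bar\phi_{Mm}=\sum_k\phi_{Mm}^k$ and $a_{Mm}^k=(\phi(x_k(s),x_M(t))-\phi_{Mm}^k)/(1-\bar\phi_{Mm})$ one obtains
\[
\sum_k\big(\phi(x_k(s),x_M(t))-\phi(x_k(s),x_m(t))\big)v_k(s)=(1-\bar\phi_{Mm})\sum_k\big(a_{Mm}^k-a_{mM}^k\big)v_k(s),
\]
and since $(a_{Mm}^k)_k$ and $(a_{mM}^k)_k$ are probability vectors, the right-hand side has norm at most $(1-\bar\phi_{Mm})\,d_V(s)$. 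Your weight lower bound, together with $\phi_{Mm}^M=\phi_{Mm}^m=0$, then gives $\bar\phi_{Mm}\ge\beta_N\psi(d_X(s)+R_v\tau_0)$ and the inequality follows. This $\min$-subtraction is the concrete content your sketch needs in place of the cross-time manipulation.
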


\begin{proof} The first inequality is by now standard, Then, we concentrate on the second one.
Due to the continuity of the velocity trajectories $v_i(t),$  there is an at most countable system of open, mutually disjoint
intervals $\{\mathcal{I}_\sigma\}_{\sigma\in\N}$ such that
$$
   \bigcup_{\sigma\in\N} \overline{\mathcal{I}_\sigma} = [0,\infty)
$$
and thus for each ${\sigma\in\N}$ there exist indices $i(\sigma)$, $j(\sigma)$
such that
$$
   d_V(t) = |v_{i(\sigma)}(t) - v_{j(\sigma)}(t)| \quad\mbox{for } t\in \mathcal{I}_\sigma.
$$
Then, by using the simplified notation $i:=i(\sigma)$, $j:=j(\sigma)$ (we may assume that $i\ne j$),
we have for every $t\in \mathcal{I}_\sigma$,
\begin{equation}\label{X6}
\begin{array}{l}
\displaystyle{
\frac 12 D^+ d_V^2(t) }\\
\hspace{1 cm} \displaystyle{ = (v_i(t) - v_j(t)) \cdot \lt(\tot{v_i(t)}{t} - \tot{v_j(t)}{t}\rt)}\\
\hspace{1 cm} \displaystyle{ = (v_i(t) - v_j(t)) \cdot \Bigg( \frac 1 {h(t)}\sum_{k=1}^N \int_{t-\tau (t)}^t \alpha(t-s) \phi(x_k(s),x_i(t))v_k(s)\,ds }\\
\displaystyle{
\hspace{5cm} - \frac 1 {h(t)} \sum_{k=1}^N \int_{t-\tau(t)}^t \alpha(t-s)\phi(x_k(s),x_j(t))v_k(s)\,ds\Bigg)}\\
\displaystyle{\qquad\qquad   -  |v_i(t) - v_j(t)|^2.}
\end{array}
\end{equation}
%where we denote $i:=i(\sigma)$ and $j:=j(\sigma)$ for simplicity.
Set
$$\phi_{ij}^k(s,t):=\min\ \left\{ \phi (x_k(s), x_i(t)), \phi (x_k(s), x_j(t))\right\}\quad \mbox{and}\quad \bar\phi_{ij}(s,t):=\sum_{k=1}^N \phi_{ij}^k(s,t)\,.$$
Note that, from the definition \eqref{interact} of $\phi (x_k(s), x_i(t)),$ it results  $\bar\phi_{ij}(s,t)<1$ for all $i\ne j,$ $1\le i,j\le N$ and for all $t>0, \ s\in [t-\tau(t), t].$
Then, we get

\begin{equation}\label{X1}
\begin{array}{l}
\displaystyle{
\frac 1 {h(t)}\sum_{k=1}^N \int_{t-\tau (t)}^t \alpha(t-s) \phi(x_k(s),x_i(t))v_k(s)\,ds}\\
\hspace{1 cm} \displaystyle{- \frac 1 {h(t)} \sum_{k=1}^N \int_{t-\tau(t)}^t \alpha(t-s)\phi(x_k(s),x_j(t))v_k(s)\,ds}\\
\displaystyle{\hspace{1,6 cm}=
\frac 1 {h(t)}\sum_{k=1}^N \int_{t-\tau (t)}^t \alpha(t-s)\left ( \phi(x_k(s),x_i(t)) -\phi_{ij}^k(s,t)\right )v_k(s)\,ds}\\
\displaystyle{\hspace{1,6 cm}
- \frac 1 {h(t)} \sum_{k=1}^N \int_{t-\tau(t)}^t \alpha(t-s)\left (\phi(x_k(s),x_j(t))-\phi_{ij}^k(s,t) \right )v_k(s)\,ds}\\
\displaystyle{\hspace{1,6 cm}=\frac 1 {h(t)}\int_{t-\tau (t)}^t \alpha(t-s) (1-\bar\phi_{ij}(s,t)) \sum_{k=1}^N (a_{ij}^k(s,t) -a_{ji}^k(s,t))v_k(s) \,ds,}
\end{array}
\end{equation}
where
$$ a_{ij}^k(s,t)=\frac {\phi (x_k(s),x_i(t))-\phi_{ij}^k(s,t)}
{1-\bar\phi_{ij}(s,t)},\quad i\ne j,\ 1\le i,j,k\le N\,.
$$
Observe that $a_{ij}^k(s,t)\ge 0$ and $\sum_{k=1}^N a_{ij}^k(s,t)=1.$
Thus, if we consider the convex hull of a finite velocity point set $\{ v_1(t), \dots, v_N(t)\}$ and denote it by $\Omega (t),$ then
$$
 \sum_{k=1}^N a_{ij}^k(s,t) v_k(s)\in\Omega (s) \quad \mbox{\rm for\ all} \ 1\le i \neq j\le N.
 $$
 This gives
 $$\left\vert
 \sum_{k=1}^N \left (a_{ij}^k(s,t)- a_{ji}^k(s,t)\right ) v_k(s)
 \right\vert \le d_V(s),$$
 which, used in \eqref{X1}, implies
 \begin{equation}\label{X2}
\begin{array}{l}
\displaystyle{
\frac 1 {h(t)}\Bigg|\sum_{k=1}^N \int_{t-\tau (t)}^t \alpha(t-s) \phi(x_k(s),x_i(t))v_k(s)\,ds}\\
\hspace{1 cm} \displaystyle{- \sum_{k=1}^N \int_{t-\tau(t)}^t \alpha(t-s)\phi(x_k(s),x_j(t))v_k(s)\,ds}\Bigg|\\
\displaystyle{\hspace{2 cm}\le
\frac 1 {h(t)}\int_{t-\tau (t)}^t \alpha(t-s) (1-\bar \phi_{ij}(s,t)) d_V(s)\, ds\,.}
\end{array}
\end{equation}
Now, by using the first equation in \eqref{main_eq}, we estimate for any $1 \leq i, k \leq N$ and $s \in [t-\tau (t),t]$,
$$\begin{aligned}
|x_k(s) - x_i(t)| &= \lt|x_k(s) - x_i(s) -\int^{t}_{s} \tot{}{t}{x}_i(z)\,dz\rt| \cr
&\leq |x_k(s) - x_i(s)| + \tau_0 \sup_{z \in [t-\tau (t), t]}|v_i(z)|.
\end{aligned}$$
Then, Lemma \ref{lem_bdd} gives
\[
   |x_k(s) - x_i(t)| \leq d_X(s) + R_v\tau_0, \quad \mbox{for} \quad s \in [t-\tau(t),t],
\]
and due to the monotonicity property of the influence function $\psi,$ for $k\ne i,$ we deduce
\begin{equation}\label{X3}
\phi(x_k(s),x_i(t)) \geq \frac{\psi(d_X(s) + R_v \tau_0)}{N-1}.
\end{equation}
On the other hand, we find
$$ \bar \phi_{ij}=\sum_{k=1}^N \phi_{ij}^k= \sum_{k\ne i,j} \phi_{ij}^k+\phi_{ij}^i + \phi_{ij}^j =\sum_{k\ne i,j} \phi_{ij}^k\,.$$
Then, from \eqref{X3}, we obtain
$$ \bar \phi_{ij} (s,t)\ge \frac {N-2}{N-1} \psi (d_X(s)+R_v \tau_0)=\beta_N \psi (d_X(s)+R_v \tau_0)\,.$$
Using the last estimate in \eqref{X2}, we have
\begin{equation*}%\label{X4}
\begin{array}{l}
\displaystyle{
\frac 1 {h(t)}\Bigg|\sum_{k=1}^N \int_{t-\tau (t)}^t \alpha(t-s) \phi(x_k(s),x_i(t))v_k(s)\,ds}\\
\hspace{1 cm} \displaystyle{- \frac 1 {h(t)} \sum_{k=1}^N \int_{t-\tau(t)}^t \alpha(t-s)\phi(x_k(s),x_j(t))v_k(s)\,ds}\Bigg|\\
\displaystyle{\hspace{2 cm}\le
\frac 1 {h(t)}\int_{t-\tau (t)}^t \alpha(t-s) \Big(1 - \beta_N\psi(d_X(s) + R_v \tau_0)\Big)d_V(s)\,ds,}
\end{array}
\end{equation*}
that, used in \eqref{X6}, concludes the proof.
\end{proof}
\begin{lemma}\label{lem_gron}
Let $u$ be a nonnegative, continuous and piecewise $\mc^1$-function satisfying,
for some constant $0 < a < 1$, the differential inequality
\bq\label{diff_ineq}
   \tot{}{t}  u(t) \leq \frac a {h(t)} \int_{t-\tau (t)}^t \alpha(t-s)u(s)\,ds - u(t) \qquad\mbox{for almost all } t>0.
\eq
Then we have
\[
   u(t) \leq \sup_{t\in [-\tau_0, 0]} u(s)\, e^{-\gamma t} \quad \mbox{for all } t \geq 0,
\]
with {$\gamma = 1 - \tau_0^{-1}H(a\tau_0 \exp(\tau_0))$}, where $H$ is the product logarithm function, i.e., $H$ satisfies $z = H(z) \exp(H(z))$ for any $z \geq 0$.
\end{lemma}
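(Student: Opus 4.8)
The plan is a barrier (comparison) argument of Halanay type. Put $M:=\sup_{s\in[-\tau_0,0]}u(s)$ and, for $\epsilon>0$, $\bar u_\epsilon(t):=(M+\epsilon)e^{-\gamma t}$; I will show $u(t)<\bar u_\epsilon(t)$ for all $t\ge 0$ and then let $\epsilon\downarrow 0$. The exponent $\gamma$ is dictated by asking $\bar u_\epsilon$ to be a strict supersolution of \eqref{diff_ineq}: writing the delay term via the change of variables $r=t-s$ as $\int_0^{\tau(t)}\alpha(r)u(t-r)\,dr$, inserting $u(t-r)\le(M+\epsilon)e^{-\gamma(t-r)}$ and using the crude bound $\int_0^{\tau(t)}\alpha(r)e^{\gamma r}\,dr\le e^{\gamma\tau(t)}h(t)\le e^{\gamma\tau_0}h(t)$ (recall $h(t)=\int_0^{\tau(t)}\alpha(s)\,ds$ and $\tau(t)\le\tau_0$), one is led to the scalar requirement $a\,e^{\gamma\tau_0}\le 1-\gamma$. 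The best choice makes this an equality; substituting $w:=\tau_0(1-\gamma)$ turns $a e^{\gamma\tau_0}=1-\gamma$ into $w e^{w}=a\tau_0 e^{\tau_0}$, i.e.\ $w=H(a\tau_0 e^{\tau_0})$, which is exactly $\gamma=1-\tau_0^{-1}H(a\tau_0 e^{\tau_0})$. Since $0<a<1$, monotonicity of $H$ together with the identity $H(\tau_0 e^{\tau_0})=\tau_0$ gives $\gamma\in(0,1)$; the positivity $\gamma>0$ is precisely what guarantees $u(s)\le M<M+\epsilon=\bar u_\epsilon(0)\le\bar u_\epsilon(s)$ on $[-\tau_0,0]$.

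For the comparison I argue by contradiction. If $u(t)\ge\bar u_\epsilon(t)$ for some $t\ge 0$, set $t_0:=\inf\{t\ge 0:u(t)\ge\bar u_\epsilon(t)\}$; by the remark above $t_0>0$, and by continuity $u(t_0)=\bar u_\epsilon(t_0)$ while $u(s)<\bar u_\epsilon(s)$ for every $s\in[-\tau_0,t_0)$, hence on the whole delay window $[t_0-\tau(t_0),t_0)\subseteq[-\tau_0,t_0)$. Since $u$ is continuous and piecewise $\mc^1$, on a left-neighbourhood of $t_0$ it is $\mc^1$, there \eqref{diff_ineq} holds pointwise (both sides being continuous on that piece — note $h$ is continuous with $h(t)\ge\int_0^{\tau_*}\alpha(s)\,ds>0$ since $\tau$ is Lipschitz and $\alpha\ge 0$), and letting $t\uparrow t_0$ yields the same inequality with the left derivative $u'(t_0^-)$ on the left. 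Inserting $u(s)\le\bar u_\epsilon(s)$ on the delay window and the bound on $\int_0^{\tau(t_0)}\alpha(r)e^{\gamma r}\,dr$ gives $u'(t_0^-)\le(M+\epsilon)e^{-\gamma t_0}\big[\tfrac{a}{h(t_0)}\int_0^{\tau(t_0)}\alpha(r)e^{\gamma r}\,dr-1\big]$, and this is in fact strictly less than $(M+\epsilon)e^{-\gamma t_0}(a e^{\gamma\tau_0}-1)=-\gamma\,\bar u_\epsilon(t_0)=\bar u_\epsilon'(t_0)$, because $\int_0^{\tau(t_0)}\alpha(r)e^{\gamma r}\,dr<e^{\gamma\tau(t_0)}h(t_0)$ strictly (as $e^{\gamma r}<e^{\gamma\tau(t_0)}$ for $r<\tau(t_0)$ while $\alpha$ has positive integral over $[0,\tau(t_0)]$ thanks to $\int_0^{\tau_*}\alpha>0$). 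Thus $(u-\bar u_\epsilon)'(t_0^-)<0$, contradicting the fact that $(u-\bar u_\epsilon)(t)\le 0=(u-\bar u_\epsilon)(t_0)$ on $[0,t_0]$, which forces $(u-\bar u_\epsilon)'(t_0^-)\ge 0$. Hence $u<\bar u_\epsilon$ on $[0,\infty)$, and letting $\epsilon\to 0$ gives $u(t)\le M e^{-\gamma t}$.

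I do not expect any step to be genuinely deep: this is a Halanay-type inequality adapted to a distributed delay, and most of the argument is bookkeeping. The two places needing a little care are (i) the identification of $\gamma$, where the only non-mechanical observation is that the transcendental equation $a e^{\gamma\tau_0}=1-\gamma$ is solved by the Lambert $W$/product-logarithm function; and (ii) the contact-point analysis under mere piecewise-$\mc^1$ regularity of $u$, which I handle via one-sided derivatives and the continuity of the right-hand side of \eqref{diff_ineq}. The hypothesis $\int_0^{\tau_*}\alpha(s)\,ds>0$ enters twice: it keeps $h$ bounded away from zero, and it makes the inequality at the contact point strict.
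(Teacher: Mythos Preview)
Your argument is correct. The paper's proof, by contrast, is a two-line reduction: since $\frac{1}{h(t)}\int_{t-\tau(t)}^{t}\alpha(t-s)\,ds=1$ and $[t-\tau(t),t]\subseteq[t-\tau_0,t]$, the right-hand side of \eqref{diff_ineq} is bounded above by $a\sup_{s\in[t-\tau_0,t]}u(s)-u(t)$, and the conclusion is then quoted directly from the classical Halanay inequality (Halanay's book, p.~378). What you have done is essentially carry out the standard Halanay barrier proof in situ, applied directly to the distributed-delay form rather than after passing to the $\sup$; the transcendental equation $a e^{\gamma\tau_0}=1-\gamma$ you arrive at is the same one that Halanay's lemma produces with delay $\tau_0$ and coefficients $(a,1)$, so the decay rate $\gamma$ coincides. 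Your route is more self-contained and makes transparent where each hypothesis ($\int_0^{\tau_*}\alpha>0$, $\tau\le\tau_0$, $0<a<1$) is used, at the cost of reproving a classical result; the paper's route is shorter but relies on an external reference.
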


\begin{proof} Note that the differential inequality \eqref{diff_ineq} implies
\[
   \tot{}{t}  u(t) \leq a \sup_{s\in [t-\tau_0, t]} u(s)-u(t).
\]
Then, the result follows from Halanay inequality (see e.g. \cite[p. 378]{Halanay}).
\end{proof}

We are now ready to proceed with the proof of Theorem \ref{thm_main}.

\begin{proof}[Proof of Theorem \ref{thm_main}]
For $t > 0$, we introduce the following Lyapunov functional for the system \eqref{main_eq}--\eqref{IC0}:
$$\begin{aligned}
   \Lyap(t) &:= h(t)d_V(t) + \beta_N \int_0^{\tau(t)} \alpha(s)\, { \lt\vert \int_{d_X(-s) + R_v \tau_0}^{d_X(t - s) + R_v\tau_0} \psi(z)\,d z\rt\vert }\,ds \cr
&\quad   + \int_0^{\tau(t)} \alpha(s)\lt( \int_{-s}^0 d_V(t + z)\, \,dz\rt) ds \cr
& =: h(t)d_V(t)+\Lyap_1(t) +\Lyap_2 (t),
\end{aligned}$$
where $R_v$ is given by \eqref{Rv} and the diameters $d_X(t)$, $d_V(t)$ are defined in \eqref{dXdV}. We first estimate $\Lyap_1$ as
{\begin{align}\label{Q3}
\begin{aligned}
D^+\Lyap_1(t)&= \beta_N\left\{\tau^\prime (t)\alpha (\tau(t))\,\lt\vert\int_{d_X(-\tau(t)) + R_v \tau_0}^{d_X(t - \tau(t)) + R_v\tau_0} \psi(z)\,d z\rt\vert  \right. \cr
&\quad\left. +  \int_0^{\tau(t)} \alpha(s) \, \mbox{ sign\,} \lt( \int_{d_X(-s) + R_v \tau_0}^{d_X(t - s) + R_v\tau_0} \psi(z)\,d z\rt)\psi (d_X(t - s) + R_v\tau_0) D^+d_X(t-s)\, ds\right\}\cr
&\leq \beta_N\int_0^{\tau(t)} \alpha(s) \psi (d_X(t - s) + R_v\tau_0) d_V(t-s)\, ds,
\end{aligned}
\end{align}
for almost all $t \geq 0$, due to \eqref{tau2} and the first inequality in Lemma \ref{lem_sddi}. Here $\mbox{sgn\,}(\cdot)$ is the signum function defined by
\[
\mbox{sgn\,}(x) := \left\{ \begin{array}{ll}
 -1 & \textrm{if $x< 0$},\\
\ 0 & \textrm{if $x= 0$},\\
\  1 & \textrm{if $x> 0$}.
  \end{array} \right.
\]
}
Analogously, we also get
\begin{align}\label{Q4}
\begin{aligned}
D^+\Lyap_2(t) &\leq \int_0^{\tau(t)} \alpha(s) \lt( d_V(t)- d_V(t-s)\rt) \, ds \cr
&=h(t)d_V(t) - \int_0^{\tau(t)} \alpha(s) d_V(t-s) \, ds,
\end{aligned}
\end{align}
for almost all $t \geq 0$. Then, from Lemma \ref{lem_sddi}, \eqref{Q3} and \eqref{Q4}, we have, for almost all $t>0$,
$$\begin{aligned}
D^+\Lyap(t) &\leq h'(t)d_V(t) \cr
&\quad + \int_{0}^{\tau(t)} \alpha(s)\Big (1 - \beta_N\psi(d_X(t-s) + R_v\tau_0)\Big )d_V(t-s)\,ds - h(t)d_V(t)\cr
   &\quad + \beta_N\int_0^{\tau(t)} \alpha(s) \psi (d_X(t - s) + R_v\tau_0) d_V(t-s)\, ds\cr
   &\quad + h(t)d_V(t) - \int_0^{\tau(t)} \alpha(s) d_V(t-s) \, ds\cr
   &=h'(t) d_V(t),
\end{aligned}$$
On the other hand, since $h^\prime(t) = \alpha(\tau(t))\tau^\prime(t) \leq 0$, we have $D^+\left (h(t)\Lyap(t)\right )\le 0,$ namely
\begin{align}\label{Q6}
\begin{aligned}
h(t) d_V(t) &+ \beta_N\int_0^{\tau(t)} \alpha(s) \, \lt\vert\int_{d_X(-s) + R_v\tau_0}^{d_X(t - s) + R_v \tau_0} \psi(z)\,d z\rt\vert \, ds \cr
&\quad + \int_0^{\tau(t)} \alpha(s)\lt( \int_{-s}^0 d_V(t + z)\, \,dz\rt) ds \cr
& \leq h(0) d_V(0) + \int_0^{\tau_0} \alpha(s)\lt( \int_{-s}^0 d_V(z)\, \,dz\rt) ds.
\end{aligned}
\end{align}
Moreover, it follows from the assumption \eqref{ass1} that there exists a positive constant $d_* > d_X(-s) + R_v\tau_0  $ such that
$$
h(0) d_V(0) + \int_0^{\tau_0} \alpha(s)\lt( \int_{-s}^0 d_V(z)\, \,dz\rt) ds \leq \beta_N \int_0^{\tau_*} \alpha(s) \int_{d_X(-s) + R_v \tau_0}^{d_*} \psi(z)\,dzds\,.
$$
This, together with \eqref{Q6} and \eqref{tau1},  implies
\begin{equation}\label{serve}
\begin{array}{l}
\displaystyle{
h(t) d_V(t) + \int_0^{\tau(t)} \alpha(s)\lt( \int_{-s}^0 d_V(t + z)\, \,dz\rt) ds} \\
\displaystyle{\quad \leq\beta_N\lt \{\int_0^{\tau_*} \alpha(s) \int_{d_X(-s) + R_v \tau_0}^{d_*} \psi(z)\,dzds
-\int_0^{\tau_*}\alpha (s)\,  \lt \vert \int_{d_X(-s) + R_v\tau_0}^{d_X(t - s) + R_v \tau_0} \psi(z)\,d z\rt \vert \, ds\rt \}}\\ \\
\displaystyle\quad = \beta_N\int_0^{\tau_*} \alpha(s) \left\{\int_{d_X(-s) + R_v \tau_0}^{d_*} \psi(z)\,dz- \left\vert \int_{d_X(-s) + R_v \tau_0}^{d_X(t-s) + R_v \tau_0} \psi(z)\,dz\right\vert \right\} ds.
\end{array}
\end{equation}
{Now, observe that, if $d_X(t-s)\geq d_X(-s),$ then
\begin{equation}\label{Serve1}
\begin{array}{l}
\displaystyle{
\int_0^{\tau_*} \alpha(s) \left\{
\int_{d_X(-s) + R_v \tau_0}^{d_*} \psi(z)\,dz- \left\vert \int_{d_X(-s) + R_v \tau_0}^{d_X(t-s) + R_v \tau_0} \psi(z)\,dz\right\vert
\right\} ds}\\ \\
\hspace{2 cm}
\displaystyle{
= \int_0^{\tau_*} \alpha(s) \int_{d_X(t-s) + R_v \tau_0}^{d_*} \psi(z)\,dz\,ds}\,.
\end{array}
\end{equation}
Similarly, when $d_X(t-s)<d_X(-s),$ then
\begin{equation}\label{Serve2}
\begin{array}{l}
\displaystyle{
\int_0^{\tau_*} \alpha(s) \left\{\int_{d_X(-s) + R_v \tau_0}^{d_*} \psi(z)\,dz- \left\vert \int_{d_X(-s) + R_v \tau_0}^{d_X(t-s) + R_v \tau_0} \psi(z)\,dz\right\vert \right\} ds}\\ \\
\hspace{1 cm}\displaystyle{
\le \int_0^{\tau_*} \alpha(s) \int_{d_X(-s) + R_v \tau_0}^{d_*} \psi(z)\,dz\,ds
\le\int_0^{\tau_*} \alpha(s) \int_{d_X(t-s) + R_v \tau_0}^{d_*} \psi(z)\,dz\,ds\,.}
\end{array}
\end{equation}
Thus, from
\eqref{serve}, \eqref{Serve1} and \eqref{Serve2} we deduce that
\begin{equation*}%\label{Serve3}
\begin{array}{l}
\displaystyle{
h(t) d_V(t) + \int_0^{\tau(t)} \alpha(s)\lt( \int_{-s}^0 d_V(t + z)\, \,dz\rt) ds}
\\
\hspace{2 cm} \displaystyle{
\le\beta_N
 \int_0^{\tau_*} \alpha(s) \int_{d_X(t-s) + R_v \tau_0}^{d_*} \psi(z)\,dz\,ds\,.}
 \end{array}
 \end{equation*}
}
{Note that,  for $s \in [0,\tau(t)]$,
\[
d_X(t-s)= d_X(t) +\int_t^{t-s} D^+ d_X(z) dz \le d_X(t) +2R_v\tau_0,
\]
due to  Lemma \ref{lem_bdd} and the first inequality of Lemma \ref{lem_sddi}.
Analogously, we also find for $s \in [0,\tau(t)]$ that
\[
d_X(t) = d_X(t-s) + \int_{t-s}^t D^+ d_X(z)\,dz \leq d_X(t-s) + 2R_v\tau_0.
\]
}
This gives
\bq\label{est_dx}
d_X(t-s) - 2R_v\tau_0 \le d_X(t) \le d_X(t-s) + 2R_v\tau_0, \quad \mbox{\rm for}\quad  s \in [0,\tau(t)]\,.
\eq
Thus we get
$$\begin{aligned}
\int_0^{\tau_*} \alpha(s) \int_{d_X(t -s) + R_v \tau_0}^{d_*} \psi(z)\,dzds &\leq \int_0^{\tau_*} \alpha(s) \int_{\max \{d_X(t)-R_v\tau_0, 0\}}^{d_*} \psi(z)\,dzds\cr
&\leq h(0)  \int_{\max \{d_X(t)-R_v\tau_0, 0\}}^{d_*} \psi(z)\,dz.
\end{aligned}$$
Combining this and \eqref{serve}, we obtain
\[
h(t)d_V(t) + \int_0^{\tau (t)} \alpha(s)\lt( \int_{-s}^0 d_V(t+z)\, \,dz\rt) ds \leq h(0)\beta_N  \int_{\max \{d_X(t)-R_v\tau_0, 0\}}^{d_*} \psi(z)\,dz.
\]
Since the left hand side of the above inequality is positive, we have
\[
d_X(t) \leq d_* +R_v\tau_0\quad \mbox{for} \quad t \geq 0.
\]
We then again use \eqref{est_dx} to find
\[
d_X(t-s) + R_v\tau_0 \leq d_X(t) + 3R_v \tau_0 \leq d_* + 4R_v \tau_0,
\]
for $s \in [0,\tau(t)]$ and $t \geq 0$. Hence, by Lemma \ref{lem_sddi} together with the monotonicity of $\psi$ we have
\[
D^+ d_V(t) \leq \frac {(1 - \psi_*\beta_N)} {h(t)} \int_{t-\tau(t)}^t \alpha(t-s) d_V(s)\,ds -  d_V(t),
\]
for almost all $t>0$, where $\psi_* = \psi(d_* + 4R_v\tau_0)$. We finally apply Lemma \ref{lem_gron} to complete the proof.
Note that, in order to have an exponential decay rate of $d_V$ independent of $N,$ it is sufficient to observe that $\beta_N\ge 1/2$ for $N\ge 3.$
\end{proof}

%%%%%%%%%%%%%%%%%%%%%%%%%%%%%%%%%%%%%%%%%%%%%%%%%%%%%%%%%%%%%%%%%%%%%%%%%%%%%%%%%
%
%
%    \section{Mean-field equation}
%
%
%%%%%%%%%%%%%%%%%%%%%%%%%%%%%%%%%%%%%%%%%%%%%%%%%%%%%%%%%%%%%%%%%%%%%%%%%%%%%%%%%
\section{A delayed Vlasov alignment equation}\label{MF}
In this section, we are interested in the behavior of solutions to the particle system \eqref{main_eq} as the number of particles $N$ goes to infinity. At the formal level, we can derive the following delayed Vlasov alignment equation from \eqref{main_eq} as $N \to \infty$:
\bq\label{main_kin}
\pa_t f_t + v \cdot \nabla_x f_t + \nabla_v \cdot \displaystyle \lt(\frac{1}{h(t)}\int_{t - \tau(t)}^t \alpha
(t-s)F[f_s]\,dsf_t\rt) = 0, \ (x,v) \in \R^d \times \R^d,  \ t > 0,
\eq
with the initial data:
\[
f_s(x,v) = g_{s}(x,v), \quad (x,v) \in \R^d \times \R^d, \quad s \in [-\tau_0,0],
\]
where $f_t=f_t(x,v)$ is the one-particle distribution function on the phase space $\R^d \times \R^d$. Here the interaction term $F[f_s]$ is given by
\[
F[f_s](x,v) :=  \frac{\displaystyle \int_{\R^d \times \R^d} \psi(|x-y|)(w-v)f_s(y,w)\,dydw}{\displaystyle \int_{\R^d \times \R^d} \psi(|x-y|)f_s(y,w)\,dydw}.
\]
For the equation \eqref{main_kin}, we provide the global-in-time existence and uniqueness of measure-valued solutions and mean-field limits from \eqref{main_eq} based on the stability estimate. We also establish the large-time behavior of measure-valued solutions showing the velocity alignment.

\subsection{Existence of measure-valued solutions} In this part, we discuss the global existence and uniqueness of measure-valued solutions to the equation \eqref{main_kin}. For this, we first define a notion of weak solutions in the definition below.
\begin{definition}\label{def_weak}
For a given $T >0$, we call $f_t \in \mc([0,T); \P_1(\R^d \times \R^d))$ a \emph{measure-valued solution} of the equation \eqref{main_kin}
on the time interval $[0,T)$, subject to the initial datum $g_s\in \mc([-\tau_0,0]; \P_1(\R^d \times \R^d))$,
if for all compactly supported test functions $\xi \in \mc_c^\infty(\R^d \times \R^d \times [0,T))$,
$$\begin{aligned}
&\int_0^T \int_{\R^d \times \R^d} f_t\lt(\pa_t \xi + v \cdot \nabla_x \xi + \frac{1}{h(t)}\int_{t - \tau(t)}^t \alpha(t-s)F[f_s]\,ds\cdot\nabla_v\xi\rt)dxdvdt \cr
&\quad + \int_{\R^d \times \R^d} g_0(x,v)\xi(x,v,0)\, dxdv = 0,
\end{aligned}$$
where $\P_1(\R^d \times \R^d)$ denotes the set of probability measures on the phase space $\R^d \times \R^d$ with bounded first-order moment and we adopt the notation $f_{t-\tau_0}:\equiv g_{t-\tau_0}$ for $t \in [0,\tau_0]$.
\end{definition}
We next introduce the $1$-Wasserstein distance.
\begin{definition}\label{defdp}
Let $\rho^1, \rho^2\in \P_1(\R^{d})$ be two probability measures on $\R^{d}$. Then $1$-Waserstein distance between $\rho^1$ and $\rho^2$ is defined as
\begin{equation*}\label{d1}
\mathcal{W}_1(\rho^1,\rho^2) := \inf_{\pi\in\Pi(\rho^1,\rho^2)} \int_{\R^d \times \R^d} |x-y|  \d\pi(x,y) ,
\end{equation*}
where $\Pi(\rho^1,\rho^2)$ represents the collection of all probability measures on $\R^d \times \R^d$ with marginals $\rho^1$ and $\rho^2$ on the first and second factors, respectively.
\end{definition}

\begin{theorem}\label{main_thm2}
Let the initial datum $g_t \in  \mc([-\tau_0,0]; \P_1(\R^d \times \R^d))$
and assume that there exists a constant $R > 0$ such that
\[
  \mbox{supp } g_t \subset B^{2d}(0,R) \qquad\mbox{for all } t \in [-\tau_0,0],
\]
where $B^{2d}(0,R)$ denotes the ball of radius $R$ in $\R^d \times \R^d$, centered at the origin.

Then for any $T > 0$, the delayed Vlasov alignment equation \eqref{main_kin} admits a unique measure-valued solution $f_t\in  \mc([0,T); \P_1(\R^d \times \R^d))$
in the sense of Definition \ref{def_weak}.
%, which is also uniformly compactly supported in position and velocity.
%Furthermore, $f_t$ is determined as the push-forward of the density $f_0 := g_0$ through the flow map
%generated by the locally Lipschitz velocity field $(v,F[f_{t-\tau}])$ in phase space.
\end{theorem}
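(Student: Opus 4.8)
The strategy is the standard one for mean-field/Vlasov equations with Lipschitz-type forcing, adapted to the delay setting: build solutions as pushforwards of the initial data along the flow of the associated characteristic ODE, and obtain existence and uniqueness from a contraction/stability estimate in the $1$-Wasserstein distance $\mathcal{W}_1$. The main technical point, as always with the Motsch--Tadmor-type normalization, is that the denominator $\int \psi(|x-y|)f_s(y,w)\,dydw$ in $F[f_s]$ must be bounded away from zero; this is where the compact-support hypothesis $\mathrm{supp}\,g_t\subset B^{2d}(0,R)$ is essential, since it keeps the flow in a bounded region on any finite time interval $[0,T)$ and hence $\psi(|x-y|)\ge\psi(2R_T)>0$ there, $R_T$ being an a priori bound on the spatial support.

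\textbf{Step 1: a priori support bound.} First I would show that any measure-valued solution has support contained in a ball $B^{2d}(0,R_T)$ for $t\in[0,T)$, with $R_T$ depending only on $R$, $T$, $\tau_0$. Along characteristics $\dot X=V$, $\dot V=\frac{1}{h(t)}\int_{t-\tau(t)}^t\alpha(t-s)F[f_s](X(t),V(t))\,ds$, the velocity field $F[f_s]$ is, by the same convexity argument as in Lemma \ref{lem_bdd}, a convex-combination average of $(w-V)$ over $\mathrm{supp}\,f_s$ minus nothing — so $|F[f_s](x,v)|\le 2\,(\text{velocity-support radius at time }s)$, giving a Gr\"onwall bound on the velocity support that is finite on $[0,T)$; integrating the first characteristic then bounds the spatial support. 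This yields a uniform lower bound $\psi_*:=\psi(2R_T)>0$ for the denominator.

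\textbf{Step 2: Lipschitz estimates and a fixed-point/stability argument.} On the space $\mc([0,T];\P_1(B^{2d}(0,R_T)))$ equipped with $\sup_{t}\mathcal{W}_1$, I would show that $F$ is Lipschitz: using boundedness and Lipschitz continuity of $\psi$, boundedness of the supports, and the lower bound $\psi_*$, one estimates $\|F[\mu]-F[\nu]\|_{L^\infty(B^{2d}(0,R_T))}\le C\,\mathcal{W}_1(\mu,\nu)$ by the usual quotient-difference manipulation (adding and subtracting, and using the Kantorovich--Rubinstein dual form against the Lipschitz functions $y\mapsto\psi(|x-y|)(w-v)$ and $y\mapsto\psi(|x-y|)$). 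Then the characteristic flow $\mathcal{T}_t$ associated to a given history $(f_s)_{s\le t}$ is well-defined and Lipschitz in its initial point; defining a solution map $\mathcal{S}$ by $\mathcal{S}(f)_t:=\mathcal{T}_t\#g_0$ (with the convention $f_{t-\tau_0}\equiv g_{t-\tau_0}$ on $[0,\tau_0]$), a Gr\"onwall-in-time estimate — crucially using that the delay only feeds in \emph{past} values $f_s$, $s<t$, so one can close the estimate on successive intervals of length $\tau_*$, or equivalently absorb the delayed term by $\sup_{s\le t}\mathcal{W}_1$ and invoke a Halanay/Gr\"onwall argument — gives $\sup_{t\le T}\mathcal{W}_1(\mathcal{S}(f)_t,\mathcal{S}(g)_t)\le C\int_0^T\sup_{s\le r}\mathcal{W}_1(f_s,g_s)\,dr$, hence a contraction after finitely many steps (Picard iteration on delay-length subintervals). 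This produces a unique fixed point, which is the desired measure-valued solution; weak-formulation consistency (Definition \ref{def_weak}) follows by the standard computation of $\frac{d}{dt}\int\xi\,df_t$ along the flow.

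\textbf{Main obstacle.} The principal difficulty is controlling the non-local denominator uniformly: without the compact support one cannot bound it below, and the Lipschitz constant of $F$ would blow up. Handling this cleanly requires doing Step 1 \emph{a priori for any candidate solution} (not just for the iterates), so that uniqueness also takes place within the class of compactly supported solutions with the common bound $R_T$; one must check the support bound is genuinely solution-independent on $[0,T)$. A secondary, more bookkeeping-type obstacle is the delay: the force at time $t$ depends on $f_s$ for $s\in[t-\tau(t),t]$, including the prescribed history on $[-\tau_0,0]$, so the contraction must be set up on intervals of length $\le\tau_*$ and iterated, or handled via a Halanay-type inequality as in Lemma \ref{lem_gron}; neither is serious, but both must be stated carefully for the fixed-point argument to be rigorous.
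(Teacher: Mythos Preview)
Your proposal is correct and follows essentially the same route as the paper: bound the support of any solution via the characteristic system, use compact support to bound the normalized denominator below, deduce that the force $F[f_s]$ is bounded and Lipschitz, and then run a standard measure-valued existence/uniqueness argument. The paper's writeup differs mainly in that it outsources the two analytic ingredients---the Lipschitz bound on $F$ to \cite[Lemma~3.1]{CH17} and the local well-posedness to the general result \cite[Theorem~3.10]{CCR}---so the only work done in the paper itself is the support estimate, whereas you spell out the fixed-point argument on delay-length subintervals directly. One small sharpening worth noting: in Step~1 you settle for a Gr\"onwall bound on the velocity support, but in fact the very argument of Lemma~\ref{lem_bdd} that you invoke gives the stronger, time-independent bound $R_V^t\le R_V^0$ (the velocity equation has the dissipative form $\dot V=(\text{convex average of past }w)-V$, so $\tfrac{d}{dt}|V|\le R_V^t-|V|$ and a comparison argument closes it); the paper uses this to get $R_X^t\le R_X^0+tR_V^0$ with no exponential growth, which makes the constant $R_T$ explicit and the denominator bound cleaner.
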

\begin{proof} The proof can be done by using a similar argument as in \cite[Theorem 3.1]{CH17}, thus we shall give it rather concisely. Let $f_t \in \mc([0,T];\P_1(\R^d \times \R^d))$ be such that
\[
  \mbox{supp } f_t \subset B^{2d}(0,R) \qquad\mbox{for all } t \in [0,T],
\]
for some positive constant $R > 0$. Then we use the similar estimate as in \cite[Lemma 3.1]{CH17} to have that there exists a constant $C > 0$ such that
\[
   |F[f_t](x,v)| \leq C \quad \mbox{and} \quad |F[f_t](x,v) - F[f_t](\tilde x,\tilde v)| \leq C\bigl( |x-\tilde x| + |v - \tilde v|\bigr),
\]
for $(x,v) \in B^{2d}(0,R),\; t\in [0,T]$. Subsequently, this gives
\[
\lt|\frac{1}{h(t)}\int_{t - \tau(t)}^t \alpha
(t-s)F[f_s]\,ds\rt| \leq C
\]
and
\[
\lt|\frac{1}{h(t)}\int_{t - \tau(t)}^t \alpha
(t-s)\lt(F[f_s](x,v) - F[f_s](\tilde x,\tilde v)\rt)ds\rt| \leq C
\]
for $(x,v) \in B^{2d}(0,R),\; t\in [0,T]$. This, together with an application of \cite[Theorem 3.10]{CCR}, provides the local-in-time existence and uniqueness of measure-valued solutions
to the equation \eqref{main_kin} in the sense of Definition \ref{def_weak}. On the other hand, once we obtain the growth estimates of support of $f_t$ in both position and velocity, these local-in-time solutions can be extended to the global-in-time ones. Thus in the rest of this proof, we provide the support estimate of $f_t$ in both position and velocity. We set
\[
   R_X[f_t] := \max_{x \,\in\, \overline{\mbox{\footnotesize supp}_x f_t}}|x|,  \qquad
   R_V[f_t] := \max_{v \,\in\, \overline{\mbox{\footnotesize supp}_v f_t}}|v|,
\]
   for $t \in [0,T]$, where supp$_x f_t$ and supp$_v f_t$ represent $x$- and $v$-projections of supp$f_t$, respectively. We also set
\(  \label{supp_diams}
   R^t_X := \max_{-\tau_0 \leq s \leq t}R_X[f_s], \qquad  R^t_V := \max_{-\tau_0 \leq s \leq t}R_V[f_s].
\)
We first construct the system of characteristics
$Z(t;x,v) := (X(t;x,v),V(t;x,v)): [0,\tau] \times \R^d \times \R^d \to \R^d \times \R^d$
associated with \eqref{main_kin},
\begin{align}\label{tra_1}
\begin{aligned}
    \tot{X(t;x,v)}{t} &= V(t;x,v),\\
    \tot{V(t;x,v)}{t} &= \frac{1}{h(t)}\int_{t- \tau(t)}^t \alpha(t-s)F[f_s]\lt(Z(t;x,v)\rt)\,ds,
\end{aligned}
\end{align}
where we again adopt the notation $f_{t-\tau_0}:\equiv g_{t-\tau_0}$ for $t \in [0,\tau_0]$.
The system \eqref{tra_1} is considered subject to the initial conditions
\(  \label{tra_1_IC}
   X(0;x,v) = x,\qquad V(0;x,v) = v,
\)
for all $(x,v)\in\R^d \times \R^d$. Note that the characteristic system \eqref{tra_1}--\eqref{tra_1_IC} is well-defined since the $F[f_s]$ is locally both bounded and Lipschitz. We can rewrite the equation for $V$ as
$$\begin{aligned}
  \frac{dV(t;x,v)}{dt} &= \frac{1}{h(t)}\int_{t-\tau(t)}^t \alpha(t-s) \lt(\frac{\int_{\R^d \times \R^d} \psi(|X(t;x,v) - y|) w\,df_s(y,w)}{\int_{\R^d \times \R^d} \psi(|X(t;x,v) - y|)\,df_s(y,w)} \rt)ds - V(t;x,v)\cr
 &= \frac{1}{h(t)}\int_{0}^{\tau(t)} \alpha(s) \lt(\frac{\int_{\R^d \times \R^d} \psi(|X(t;x,v) - y|) w\,df_{t-s}(y,w)}{\int_{\R^d \times \R^d} \psi(|X(t;x,v) - y|)\,df_{t-s}(y,w)} \rt)ds - V(t;x,v).
\end{aligned}$$
Then, arguing as in the proof of Lemma \ref{lem_bdd}, we get
\[
   \frac{d|V(t)|}{dt} \leq R^t_V - |V(t)|,
\]
due to \eqref{supp_diams}. Using again a similar argument as in the proof of Lemma \ref{lem_bdd} and the comparison lemma, we obtain
\[
   R^t_V \leq R^0_V \quad \mbox{for} \quad t \geq 0,
\]
which further implies $R^t_X \leq R^0_X + t R^0_V$ for $t \geq 0$. This completes the proof.
\end{proof}

\subsection{Stability \& mean-field limit} In this subsection, we discuss the rigorous derivation of the delayed Vlasov alignment equation \eqref{main_kin} from the particle system \eqref{main_eq} as $N \to \infty$. For this, we first provide the stability of measure-valued solutions of \eqref{main_kin}.

\begin{theorem}\label{main_thm3}
Let $f^i_t \in \mc([0, T);\P_1(\R^d \times \R^d))$, $i=1,2$, be two measure-valued solutions of \eqref{main_kin} on the time interval $[0, T]$,
subject to the compactly supported initial data $g^i_s\in \mc([-\tau_0, 0];\P_1(\R^d \times \R^d))$.
%Then we have
%\[
 %  \tot{}{t} d_1(f^1_t,f^2_t) \leq C\lt(d_1(f^1_t,f^2_t) + d_1(f^1_{t-\tau},f^2_{t-\tau}) \rt) \quad \mbox{for} \quad t \in [0,T],
%\]
Then there exists a constant $C=C(T)$ such that
\[
\mathcal{W}_1(f^1_t,f^2_t) \leq C \max_{s\in[-\tau_0,0]} \mathcal{W}_1(g^1_s,g^2_s) \quad \mbox{for} \quad t \in [0,T).
\]
\end{theorem}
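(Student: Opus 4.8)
The plan is to follow the standard coupling/characteristics approach for Wasserstein stability of Vlasov-type equations, adapted to handle the distributed delay. First I would fix $T>0$ and recall from the proof of Theorem \ref{main_thm2} that both solutions $f^1_t, f^2_t$ have supports contained in a common ball $B^{2d}(0,R_T)$ for $t\in[-\tau_0,T]$, where $R_T$ depends only on $T$ and the radius $R$ of the initial supports; this is what makes $F[f^i_s]$ uniformly bounded and Lipschitz on the relevant region with a constant $L=L(T)$. I would then represent each $f^i_t$ along its own characteristic flow $Z^i(t;x,v)=(X^i,V^i)$ solving \eqref{tra_1}--\eqref{tra_1_IC} with force driven by $f^i$, so that $f^i_t = Z^i(t)_\#\, g^i_0$, and for $t\in[0,\tau_0]$ the delay term reaches back into the prescribed data $g^i_{t-\tau_0}$.

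The core estimate is a Gr\"onwall argument for the quantity
\[
\mathcal{Q}(t) := \mathcal{W}_1(f^1_t,f^2_t) + \sup_{s\in[-\tau_0,0]}\mathcal{W}_1(g^1_s,g^2_s),
\]
or more precisely for $\sup_{-\tau_0\le s\le t}\mathcal{W}_1(f^1_s,f^2_s)$. Picking an optimal transport plan between $g^1_0$ and $g^2_0$ and pushing it forward by $(Z^1(t),Z^2(t))$ gives an admissible plan between $f^1_t$ and $f^2_t$, so $\mathcal{W}_1(f^1_t,f^2_t)$ is bounded by the expected value of $|X^1(t)-X^2(t)| + |V^1(t)-V^2(t)|$ along coupled trajectories. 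Differentiating, the position difference is controlled by the velocity difference, and the velocity difference obeys
\[
\tot{}{t}\bigl(|X^1-X^2|+|V^1-V^2|\bigr) \lesssim |X^1-X^2|+|V^1-V^2| + \sup_{s\in[t-\tau_0,t]}\Bigl\| F[f^1_s] - F[f^2_s]\Bigr\|_{L^\infty(B^{2d}(0,R_T))} + \text{(Lipschitz terms in }X\text{)}.
\]
Here I would split $F[f^1_s](X^1)-F[f^2_s](X^2)$ into $\bigl(F[f^1_s]-F[f^2_s]\bigr)(X^1)$ plus $F[f^2_s](X^1)-F[f^2_s](X^2)$; the second is handled by the Lipschitz bound on $F[f^2_s]$, and the first requires the key lemma (the analogue of \cite[Lemma 3.1]{CH17}) that $\|F[f^1_s]-F[f^2_s]\|_{L^\infty} \le C(R_T)\,\mathcal{W}_1(f^1_s,f^2_s)$, using that $\psi$ is bounded, positive, Lipschitz, and the denominators $\int \psi(|x-y|)\,df^i_s$ are bounded below on the support ball.

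Integrating and taking the supremum over trajectories (i.e., passing back to $\mathcal{W}_1$), and then the supremum over $s\le t$, yields an inequality of the form
\[
\sup_{-\tau_0\le s\le t}\mathcal{W}_1(f^1_s,f^2_s) \le \max_{s\in[-\tau_0,0]}\mathcal{W}_1(g^1_s,g^2_s) + C(T)\int_0^t \sup_{-\tau_0\le \sigma\le r}\mathcal{W}_1(f^1_\sigma,f^2_\sigma)\,dr,
\]
where the delayed term $\sup_{s\in[t-\tau_0,t]}$ has been absorbed into $\sup_{-\tau_0\le\sigma\le r}$; note $\tfrac1{h(t)}\int_0^{\tau(t)}\alpha(t-s)\,ds = 1$ keeps the averaging harmless. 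Gr\"onwall's lemma then gives $\mathcal{W}_1(f^1_t,f^2_t)\le C(T)\max_{s\in[-\tau_0,0]}\mathcal{W}_1(g^1_s,g^2_s)$, with $C(T)=e^{C(T)T}$ up to constants. The main obstacle, as usual for Motsch--Tadmor-type normalized weights, is the control of the normalized force difference: one must verify the uniform lower bound on the denominators (which needs the uniform compact support from Theorem \ref{main_thm2}) and carefully estimate the quotient $\frac{A_1}{B_1}-\frac{A_2}{B_2}$ in terms of $\mathcal{W}_1$; everything else is a routine delayed Gr\"onwall argument. A minor technical point is the bookkeeping for $t\in[0,\tau_0]$, where part of the delay window uses the fixed initial data $g^i$ rather than the evolving $f^i$, but this only contributes the $\max_{s\in[-\tau_0,0]}\mathcal{W}_1(g^1_s,g^2_s)$ term and does not affect the Gr\"onwall closure.
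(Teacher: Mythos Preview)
Your proposal is correct and follows essentially the same route as the paper, which merely cites \cite[Theorem~3.2]{CH17} and records the key differential inequality
\[
\frac{d}{dt}\mathcal{W}_1(f^1_t,f^2_t) \leq C \left( \mathcal{W}_1(f^1_t,f^2_t) + \frac{1}{h(t)}\int_{t - \tau(t)}^t \alpha(t-s)\,\mathcal{W}_1(f^1_s,f^2_s)\,ds \right)
\]
before applying a delayed Gr\"onwall argument to obtain $\mathcal{W}_1(f^1_t,f^2_t)\le e^{2CT}\max_{s\in[-\tau_0,0]}\mathcal{W}_1(g^1_s,g^2_s)$. Your coupling-by-characteristics derivation, the splitting of $F[f^1_s](X^1)-F[f^2_s](X^2)$, and the use of the Lipschitz-in-$\mathcal{W}_1$ estimate for the normalized force are exactly the ingredients behind that inequality; the only cosmetic difference is that you pass to $\sup_{s\in[t-\tau_0,t]}$ immediately rather than keeping the $\alpha$-weighted average, which is harmless since $\tfrac{1}{h(t)}\int_0^{\tau(t)}\alpha(s)\,ds=1$.
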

\begin{proof}Again, the proof is very similar to \cite[Theorem 3.2]{CH17}, see also \cite{CCH, CCHS}. Indeed, we can obtain
\[
\frac{d}{dt}\mathcal{W}_1(f^1_t,f^2_t) \leq C \lt( \mathcal{W}_1(f^1_t,f^2_t) + \frac{1}{h(t)}\int_{t - \tau(t)}^t \alpha(t-s)\mathcal{W}_1(f^1_s,f^2_s)\,ds \rt).
\]
Then we have
\[
\mathcal{W}_1(f^1_t,f^2_t) \leq e^{2CT}\max_{s\in[-\tau_0,0]} \mathcal{W}_1(g^1_s,g^2_s),
\]
for $t \in [0,T)$.
\end{proof}
\begin{remark} Since the empirical measure $ f^N_t(x,v) := \frac{1}{N}\sum_{i=1}^N  \delta_{(x^N_i(t), v^N_i(t))}(x,v)$ associated to the $N$-particle system \eqref{main_eq} with the initial data $g^N_s(x,v) := \frac1N\sum_{i=1}^N  \delta_{(x_i^0(s), v_i^0(s))}(x,v)$ for $s\in[-\tau_0,0]$, is a solution to the equation \eqref{main_kin}, we can use Theorem \ref{main_thm3} to have the following mean-field limit estimate:
\[
\sup_{t \in [0,T)}\mathcal{W}_1(f_t,f^N_t) \leq C \max_{s\in[-\tau_0,0]} \mathcal{W}_1(g_s,g^N_s),
\]
where $C$ is a positive constant independent of $N$.
\end{remark}

\subsection{Asymptotic behavior of the delayed Vlasov alignment equation \eqref{main_kin}}
In this part, we provide the asymptotic behavior of solutions to the equation \eqref{main_kin} showing the velocity alignment under suitable assumptions on the initial data. For this, we first define the position- and velocity-diameters for a compactly supported measure $g\in\P_1(\R^d \times \R^d)$,
\[
   d_X[g] := \mbox{diam}\lt({\mbox{supp}_x g}\rt), \qquad  d_V[g] := \mbox{diam}\lt({\mbox{supp}_v g}\rt),
\]
where supp$_x f$ denotes the $x$-projection of supp$f$ and similarly for supp$_v f$.

\begin{theorem}\label{main_thm4}
Let  $T>0$ and $f_t \in \mc([0, T);\P_1(\R^d \times \R^d))$ be a weak solution of \eqref{main_kin} on the time interval $[0, T)$ with compactly supported initial data $g_s\in \mc([-\tau_0, 0];\P_1(\R^d \times \R^d))$. Furthermore, we assume that
\bq\label{kin_flocking_cond}
   h(0)d_V[g_0]  + \int_0^{\tau_0} \alpha(s) \lt(\int_{-s}^0 d_V[g_z]\, dz\rt) ds <  \int_0^{\tau_*} \alpha(s) \lt( \int_{d_X[g_{-s}] + R_V^0 \tau_0}^\infty \psi(z)\, dz\rt)ds.
\eq
Then the weak solution $f_t$ satisfies
\[
   d_V[f_t] \leq \left( \max_{s\in[-\tau_0,0]} d_V[g_s] \right) e^{-Ct} \quad \mbox{for } t \geq 0,
   \qquad \sup_{t\geq 0} d_X[f_t] < \infty,
\]
where $C$ is a positive constant independent of $t$.
\end{theorem}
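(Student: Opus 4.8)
The plan is \emph{not} to redo the Dini/Lyapunov analysis of Section~\ref{sec_par} directly for \eqref{main_kin}, but to transfer the particle estimate of Theorem~\ref{thm_main} to the kinetic level by a mean-field limit, crucially exploiting that both the spatial bound and the decay rate in Theorem~\ref{thm_main} are independent of $N$.

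First I would approximate the initial datum by empirical measures. Since each $g_s$ is compactly supported and $s\mapsto g_s$ is uniformly continuous into $\P_1(\R^d\times\R^d)$ on $[-\tau_0,0]$, for every $N$ I would construct continuous trajectories $x_i^0,v_i^0\in\mc([-\tau_0,0];\R^d)$, $i=1,\dots,N$, with $(x_i^0(s),v_i^0(s))\in\mathrm{supp}\,g_s$ for all $s$, such that the empirical measures $g_s^N:=\frac1N\sum_{i=1}^N\delta_{(x_i^0(s),v_i^0(s))}$ satisfy $\sup_{s\in[-\tau_0,0]}\mathcal{W}_1(g_s,g_s^N)\to0$ as $N\to\infty$ (a standard partition/interpolation construction). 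Because all atoms lie in $\mathrm{supp}\,g_s$, one automatically has, for every $N$,
$$d_X[g_s^N]\le d_X[g_s],\qquad d_V[g_s^N]\le d_V[g_s],\qquad R_v^N:=\max_{s,i}|v_i^0(s)|\le R_V^0 .$$
Consequently the left-hand side of the particle flocking condition \eqref{ass1} written for $(g_s^N)$ is bounded above by the left-hand side of \eqref{kin_flocking_cond}, while, by monotonicity of $\psi$, its right-hand side is at least $\beta_N$ times the right-hand side of \eqref{kin_flocking_cond}. Since \eqref{kin_flocking_cond} is a \emph{strict} inequality and $\beta_N\uparrow1$ (Remark~\ref{rmk_2.2}), there is $N_*$ such that \eqref{ass1} holds for all $N\ge N_*$; moreover the threshold $d_*$ appearing in the proof of Theorem~\ref{thm_main} can be fixed once and for all (since $\int_{\cdot}^{d_*}\psi\uparrow\int_{\cdot}^{\infty}\psi$), so that the resulting bounds are $N$-uniform.

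For $N\ge N_*$, Theorem~\ref{thm_main} gives a global particle solution $(\bx^N,\bv^N)$ with $\sup_{t\ge0}d_X^N(t)\le C_1$ and $d_V^N(t)\le\big(\max_{s\in[-\tau_0,0]}d_V^N(s)\big)e^{-\gamma t}$, where $C_1$ and $\gamma>0$ are independent of $N$ (for $\gamma$ one uses $\beta_N\ge1/2$, as in the last line of the proof of Theorem~\ref{thm_main} together with Lemma~\ref{lem_gron}). The empirical measure $f_t^N:=\frac1N\sum_{i=1}^N\delta_{(x_i^N(t),v_i^N(t))}$ is a measure-valued solution of \eqref{main_kin} with initial datum $g_s^N$, and $d_X[f_t^N]=d_X^N(t)$, $d_V[f_t^N]=d_V^N(t)$. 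Fixing any $T>0$ and applying the stability estimate of Theorem~\ref{main_thm3} to $f_t$ and $f_t^N$, I get $\sup_{t\in[0,T)}\mathcal{W}_1(f_t,f_t^N)\le C(T)\,\sup_{s\in[-\tau_0,0]}\mathcal{W}_1(g_s,g_s^N)\to0$, so $f_t^N\rightharpoonup f_t$ for every $t\in[0,T)$. Finally I would use the lower semicontinuity of the support-diameter under weak convergence — if $\mu_N\rightharpoonup\mu$ then $\mathrm{diam}(\mathrm{supp}\,\mu)\le\liminf_N\mathrm{diam}(\mathrm{supp}\,\mu_N)$, because, by the portmanteau theorem applied to small balls, any two points of $\mathrm{supp}\,\mu$ are limits of points of $\mathrm{supp}\,\mu_N$ — applied to the $v$- and $x$-marginals. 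This yields $d_V[f_t]\le\liminf_N d_V^N(t)\le\big(\max_{s}d_V[g_s]\big)e^{-\gamma t}$ and $d_X[f_t]\le\liminf_N d_X^N(t)\le C_1$ for all $t\in[0,T)$; since $T>0$ is arbitrary and the solution is global and unique by Theorem~\ref{main_thm2}, this gives the claim with $C=\gamma$.

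The main obstacle is the mismatch between the particle condition \eqref{ass1} and the kinetic condition \eqref{kin_flocking_cond}: the former carries the factor $\beta_N<1$ and is phrased in terms of particle data rather than the kinetic limit object. Overcoming it requires (i) the strictness of \eqref{kin_flocking_cond} together with $\beta_N\uparrow1$ to absorb the loss, (ii) an approximation in which the relevant diameters do not increase in the limit, and (iii) a careful bookkeeping of the constants in the proofs of Theorem~\ref{thm_main} and Lemma~\ref{lem_gron} to confirm that $C_1$ and $\gamma$ are genuinely $N$-independent. A secondary technicality is the construction of \emph{continuous-in-time} empirical initial trajectories supported in $\mathrm{supp}\,g_s$; one could instead mimic the Dini/Lyapunov computation of Section~\ref{sec_par} along the characteristics \eqref{tra_1} of \eqref{main_kin}, but the mean-field route is shorter given Theorems~\ref{thm_main} and \ref{main_thm3}.
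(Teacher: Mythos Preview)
Your proposal is correct and follows essentially the same route as the paper: approximate $g_s$ by empirical measures, apply the $N$-uniform particle flocking of Theorem~\ref{thm_main}, and pass to the limit via the stability estimate of Theorem~\ref{main_thm3}. If anything, your write-up is more careful than the paper's own proof: the paper simply asserts that one can choose the approximating particles so that \eqref{ass1} holds uniformly in $N$ without spelling out the role of $\beta_N\uparrow 1$ and the strictness of \eqref{kin_flocking_cond}, and it writes ``$d_V[f_t]=d_V(t)$'' where, as you correctly note, what is actually available (and sufficient) is the lower semicontinuity $d_V[f_t]\le\liminf_N d_V^N(t)$ under weak convergence.
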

Let us point out that the flocking estimate at the particle level, see Section \ref{sec_par} and Remark \ref{rmk_2.2}, is independent of the number of particles, thus we can directly use the same strategy as in \cite{CFRT, CH17, Ha-Liu}. However, we provide the details of the proof for the completeness.
\begin{proof}[Proof of Theorem \ref{main_thm4}] We consider an empirical measure $\{g^N_s\}_{N\in\N}$, which is a family of $N$-particle approximations of $g_s$, i.e.,
\[
   g^N_s(x,v) := \frac1N\sum_{i=1}^N  \delta_{(x_i^0(s), v_i^0(s))}(x,v) \qquad\mbox{for } s\in[-\tau_0,0],
\]
where the $x_i^0, v_i^0 \in \mc([-\tau_0,0];\R^d)$ are chosen such that
\[
   \max_{s\in[-\tau_0,0]} \mathcal{W}_1(g^N_s,g_s) \to 0 \quad\mbox{as}\quad N\to\infty.
\]
Note that we can choose $(x_i^0, v_i^0)_{i=1,\dots,N}$ such that the condition \eqref{ass1} is satisfied uniformly in $N\in\N$ due to the assumption \eqref{kin_flocking_cond}. Let us denote by $(x^N_i,v^N_i)_{i=1,\dots,N}$ the solution of the $N$-particle system \eqref{main_eq}--\eqref{IC0} subject to the initial data $(x_i^0,v_i^0)_{i=1,\dots,N}$ constructed above. Then it follows from Theorem \ref{thm_main} that there exists a positive constant $C_1>0$ such that
\[
   d_V(t) \leq \left( \max_{s\in[-\tau_0,0]} d_V(s) \right) e^{-C_1 t} \quad \mbox{for } t \geq 0,
\]
with the diameters $d_V$, $d_X$ defined in \eqref{dXdV}, where $C_1>0$ is independent of $t$ and $N$. Note that the empirical measure
\[
   f^N_t(x,v) := \frac{1}{N}\sum_{i=1}^N  \delta_{(x^N_i(t), v^N_i(t))}(x,v)
\]
is a measure-valued solution of the delayed Vlasov alignment equation \eqref{main_kin} in the sense of Definition \ref{def_weak}. On the other hand, by Theorem \ref{main_thm3}, for any fixed $T>0$,  we have the following stability estimate
\[
 \mathcal{W}_1  (f_t,f^N_t) \leq C_2 \max_{s\in[-\tau_0,0]} \mathcal{W}_1(g_s,g^N_s) \quad \mbox{for} \quad t \in [0,T),
\]
where the constant $C_2>0$ is independent of $N$. This yields that sending $N\to\infty$ implies $d_V[f_t] = d_V(t)$ on $[0,T)$ for any fixed $T>0$. Thus we have
\[
   d_V[f_t] \leq \left( \max_{s\in[-\tau_0,0]} d_V[g_s] \right) e^{-C_1 t} \quad \mbox{for} \quad t \in[0,T).
\]
Since the uniform-in-$t$ boundedness of $d_X[f_t]$ just follows from the above exponential decay estimate, we conclude the desired result.
\end{proof}

%%%%%%%%%%%%%%%%%%%%%%%%%%%%%%%%%%%%%%%%%%%%%%%%%%%%%%%%%%%%%%%%%%%%%%%%%%%%%%%%%
%
%
%                        Acknowledgments
%
%
%%%%%%%%%%%%%%%%%%%%%%%%%%%%%%%%%%%%%%%%%%%%%%%%%%%%%%%%%%%%%%%%%%%%%%%%%%%%%%%%%

\section*{Acknowledgments}
The first author was supported by National Research Foundation of Korea(NRF) grant funded by the Korea government(MSIP) (No. 2017R1C1B2012918 and 2017R1A4A1014735) and POSCO Science Fellowship of POSCO TJ Park Foundation. The research of the second author was  partially supported by the GNAMPA 2018  project {\sl Analisi e controllo di modelli differenziali non lineari} (INdAM).

%%%%%%%%%%%%%%%%%%%%%%%%%%%%%%%%%%%%%%%%%%%%%%%%%%%%%%%%%%%%%%%%%%%%%%%%%%%%%%%%%
%
%
%                         thebibliography
%
%
%%%%%%%%%%%%%%%%%%%%%%%%%%%%%%%%%%%%%%%%%%%%%%%%%%%%%%%%%%%%%%%%%%%%%%%%%%%%%%%%%

\end{document}